 \newtheorem{thm}{Theorem}[section]
 \newtheorem{lem}[thm]{Lemma}
\theoremstyle{definition}
\newtheorem{defn}[thm]{Definition}
 \theoremstyle{remark}
 \newtheorem{rem}[thm]{Remark}
\newcommand{\abs}[1]{\left\vert#1\right\vert}
\newcommand{\norm}[1]{\left\|#1\right\|}
\numberwithin{equation}{section}
\begin{document}

    \title{The effect of the smoothness of fractional type operators over their commutators with Lipschitz symbols on weighted spaces}

\author[1]{Estefanía Dalmasso\thanks{edalmasso@santafe-conicet.gov.ar}}
\author[2]{Gladis Pradolini\thanks{gladis.pradolini@gmail.com}}
\author[3]{Wilfredo Ramos\thanks{wramos@santafe-conicet.gov.ar}}
\affil[1]{{\footnotesize Instituto de Matemática Aplicada del
Litoral, UNL, CONICET, FIQ, Santa Fe, Argentina.}}
\affil[2]{{\footnotesize Facultad de Ingenier\'ia Qu\'imica, UNL,
Santa Fe, Argentina. Researcher of CONICET.}}
\affil[3]{{\footnotesize Facultad de Ciencias Exactas y Naturales y
Agrimensura, UNNE, Corrientes, Argentina. Researcher of CONICET.}}

\date{\vspace{-1.5cm}}

\maketitle

  \begin{abstract}
   We prove boundedness results for integral operators of fractional type and their higher order commutators between weighted spaces, including $L^p$-$L^q$, $L^p$-$BMO$ and
    $L^p$-Lipschitz estimates. The kernels of such operators satisfy certain size condition and
    a Lipschitz type regularity, and the symbol of the commutator belongs to a Lipschitz class. We also deal with commutators of fractional type operators with less regular kernels satisfying a
    H\"ormander's type inequality. As far as we know, these last results are new even in the unweighted case. Moreover, we give a characterization result involving symbols of the commutators and continuity
    results for extreme values of $p$.
  \end{abstract}

\section{Introduction}

There is a close relationship between the theory of Partial
Differential Equations and Harmonic Analysis. This is evidenced, for
instance, by the existence of a mechanism that  provides us with regular solutions of {\sl PDE's} when we equip this
machinery with continuity properties of certain related operators
(see, for example, \cite{BCe}, \cite{BCM}, \cite{ChFL},
\cite{ChFL2}, \cite{DR}, \cite{Rios}). Therefore, it seems
appropriate to explore the boundedness properties of the mentioned
operators and, particularly, we shall be concerned with commutators
of integral operators of fractional type.

It is well known that the fractional integral operator of order
$\alpha$, $0<\alpha<n$, is defined by
\begin{equation*}
I_{\alpha}f(x)=\int_{\mathbb{R}^{n}}\frac{f(y)}{|x-y|^{n-\alpha}}\,
dy,
\end{equation*}
whenever this integral is finite. There is a vast amount of
information about the behavior of the operator above (see for
example \cite{HSV}, \cite{HL}, \cite{MW}, \cite{Pra} and
\cite{Swe}). In \cite{Chanillo}, Chanillo introduced the first order
commutator of $I_{\alpha}$  with a symbol $b\in L_{\rm
    loc}^{1}(\mathbb{R}^{n})$, formally defined by
\begin{equation*}
[b,I_{\alpha}](f)= b\,I_{\alpha}f-I_{\alpha}(bf).
\end{equation*}
Particularly, if $b\in BMO$, the space of bounded mean oscillation,
the author proved that, for $1<p<n/\alpha$ and $1/q=1/p-\alpha/n$,
the operator $[b,I_{\alpha}]$ is bounded from $L^{p}(\mathbb R^n)$ into
$L^{q}(\mathbb R^n)$. Some topics related to properties of boundedness for
commutators of fractional integral operators for extreme values of
$p$ can be found in \cite{HST}. (For more information about $BMO$
spaces see \cite{JN1}).

The continuity properties of the commutator of the fractional
integral operator acting on different spaces were studied by several
authors contributing, in this way, to the development of {\sl
    PDE's}. Some related articles are given by \cite{BHP}, \cite{CUF}, \cite{DLZ}, \cite{GPS4}, \cite{LK}, \cite{P}, \cite{P2},
\cite{PPTT}, \cite{PS}, \cite{ST}. In \cite{MY} the authors consider
the commutators of certain fractional type operators with Lipschitz
symbols and prove the boundedness between Lebesgue spaces,
including the boundedness from Lebesgue spaces into $BMO$ and
Lipschitz spaces on non-homogeneous spaces. (See also \cite{PR2} in
the context of variable Lebesgue spaces).

Nevertheless, there is not enough information about the behavior of
the commutators acting between weighted Lebesgue spaces, even less
for extreme values of $p$, that is, the weighted $L^{p}$-$BMO$ or
$L^{p}$-{\sl Lipschitz} boundedness. Hence, one of our main aims is,
precisely, to give sufficient conditions on the weights in order to
obtain these continuity properties. Some previous results in this
direction were given in \cite{BDP} where the authors study the
boundedness between Lebesgue spaces with variable exponent for
commutators of fractional type operators with $BMO$ symbols,
(see also \cite{DP} in the framework of Orlicz spaces).





We shall first consider fractional type operators, and their
commutators, which kernels satisfy certain size condition and a
Lipschitz type regularity. For this type of operators we prove
boundedness results of the type described above. Particularly we
prove a characterization result involving symbols of the commutators
and continuity results for extreme values of $p$.

Later, we study commutators of fractional type operators with less
regular kernels. These type of operators include a great variety of
operators and were introduced in \cite{BLR}. See section
\ref{subsechormander} for examples and  more information.

The paper is organized as follows. In section \S
$\ref{preliminaries}$ we give the preliminaries definitions in order
to state the main results of the article, which are also included in
this section. Then, in \S $\ref{Aux}$ we give some auxiliary results
which allow us to prove the main results in \S $\ref{proofs}$.

\section{Preliminaries and main results}\label{preliminaries}

In this section we give the definitions of the operators we shall be
dealing with and the functional class of the symbols in order to
define the commutators.

We shall consider fractional operators of convolution type
$T_\alpha$, $0< \alpha<n$, defined by
\begin{equation}\label{def1}
T_\alpha f(x)=\int_{\mathbb{R}^{n}}K_\alpha(x-y)f(y)dy,
\end{equation}
where the kernel $K_\alpha$ is not identically zero and verifies
certain size and smoothness conditions.

Let $0< \delta < 1$. We say that a function $b$ belongs to the space
$\Lambda (\delta)$ if there exists a positive constant $C$ such
that, for every $x$, $y\in \mathbb R^n$
\[|b(x)-b(y)|\leq C|x-y|^\delta.\]
 The smallest of such constants will be denoted by $\|b\|_{\Lambda
(\delta)}$. We will be dealing with commutators with symbols
belonging to this class of functions.

Given a weight $w$, that is, a non-negative and locally integrable
function, we say that a measurable function $f$ belongs to
$L^p_w(\mathbb R^n)$ for some $1< p< \infty$ if $fw\in L^p(\mathbb
R^n)$

We classify the operators defined in \eqref{def1} into two different
types, according to the conditions satisfied by $K_\alpha$.

\subsection{Fractional integral operators with Lipschitz regularity}

    We say that $K_\alpha$ satisfies the size condition $S_\alpha$ if it verifies the following inequality
        \begin{equation*}
        \int_{s<|x|\leq 2s} |K_{\alpha}(x)|\, dx \leq C s^{\alpha},
        \end{equation*}
        for every $s>0$ and some positive constant $C$.

    We shall also assume that $K_\alpha$ satisfies the smoothness condition $H_{\alpha,\infty}^*$, that is, there exist a positive constant $C$ and $0< \eta\leq 1$ such that
    \begin{equation*}
    \abs{K_\alpha(x-y)-K_\alpha(x'-y)}+\abs{K_\alpha(y-x)-K_\alpha(y-x')}\leq
    C\frac{\abs{x-x'}^{\eta}}{\abs{x-y}^{n-\alpha +\eta}},
    \end{equation*}
    whenever $\abs{x-y}\geq 2\abs{x-x'}$.

A typical example is the fractional integral operator $I_\alpha$, whose kernel $K_\alpha(x)=|x|^{\alpha-n}$ satisfies
conditions $S_{\alpha}$ and $H_{\alpha,\infty}^*$, as it can be
easily checked.

    Related with the fractional integral operators $T_\alpha$, we can formally define the higher order commutators with symbol $b\in L^1_{{\rm loc}}(\mathbb R^n)$, by
    \begin{equation*}\label{commutators}
    T_{\alpha,b}^m f(x)=\int_{\mathbb{R}^{n}}(b(x)-b(y))^m K_\alpha(x-y)f(y)dy,
    \end{equation*}
where $m\in \mathbb N_0$ is the order of the commutator. Clearly,
$T_{\alpha,b}^0=T_\alpha$.

As we have said, we are interested in studying the boundedness
properties of the commutators $T_{\alpha, b}^m$, with symbol $b\in
\Lambda(\delta)$, on weighted spaces. We shall first consider their
continuity on weighted Lebesgue spaces of the type defined previously. We
shall also analyze the boundedness of $T_{\alpha, b}^m$ from
weighted Lebesgue spaces into certain weighted version of Lipschitz
spaces. For $0<\delta<1$ and $w$ a weight, these spaces are denoted by $\mathbb L_w(\delta)$ and collect the functions $f\in
L^1_{{\rm loc}}(\mathbb R^n)$ that satisfy
\begin{equation}\label{Lipnormw}
\sup\limits_{B}\frac{\|w\chi_B\|_\infty}{|B|^{1+\frac{\delta}{n}}} \int_B |f(x)-f_B| dx<\infty,
\end{equation}
where $\|g\|_\infty$ denotes the essential supremum of a measurable
function $g$. The case $\delta=0$ of the space above was introduced
in \cite{MW} as a weighted version of the space of functions with
bounded mean oscillation.

The classes of weights we will be dealing with are the well-known
$A_{p,q}$ classes of Muckenhoupt and Wheeden (\cite{MW}). For $1\leq
p,q< \infty$ these classes are defined as the weights $w$ such
that
\[[w]_{A_{p,q}}:=\sup\limits_{B} \left(\frac{1}{|B|}\int_B w(x)^q dx\right)^{1/q}\left(\frac{1}{|B|}\int_B w(x)^{-p'} dx\right)^{1/p'}<\infty.\]
When $q=\infty$, we understand that $w\in A_{p,\infty}$ as $w^{-p'}
\in A_1$.

In this subsection we shall assume that the operator $T_\alpha$ has
a convolution kernel $K_\alpha$ that verifies conditions
$S_{\alpha}$ and $H_{\alpha,\infty}^*$ with $0<\eta\leq 1$. In order
to simplify the hypothesis we shall supposse that $m\in \mathbb N
\cup\{0\}$ with the convention that $\beta/0=\infty$ if $\beta>0$.

We now give the boundedness result between weighted Lebesgue spaces
for the higher order commutators of $T_{\alpha}$ with Lipschitz
symbols.

\begin{thm}\label{teo1}Let $0<\alpha<n$ and
$0<\delta<\min\{\eta,(n-\alpha)/m\} $. Let $1<p<n/(m\delta+\alpha)$,
$1/q=1/p-(m\delta+\alpha)/n$ and $b\in \Lambda(\delta)$. If $w\in
A_{p,q}$, then there exists a positive constant $C$ such that
\[\left(\int_{\mathbb R^n} |T_{\alpha,b}^m f(x)|^q w(x)^q dx
\right)^{1/q}\leq C \norm{b}_{\Lambda(\delta)}^m \left(\int_{\mathbb
R^n} |f(x)|^p w(x)^p dx\right)^{1/p}\] for every $f\in L^p_w(\mathbb
R^n)$.
    \end{thm}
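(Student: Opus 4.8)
The plan is to reduce the $L^p_w$–$L^q_w$ estimate for $T_{\alpha,b}^m$ to a known boundedness result for a fractional integral operator with the larger smoothing parameter $\beta:=m\delta+\alpha$. First I would exploit the pointwise identity
\[
(b(x)-b(y))^m=\sum_{j=0}^m\binom{m}{j}(-1)^j(b(x)-b(y))^j,
\]
which is of course trivial, but the real structural idea is different: since $b\in\Lambda(\delta)$ we have the crude pointwise bound $|b(x)-b(y)|^m\leq \|b\|_{\Lambda(\delta)}^m|x-y|^{m\delta}$, hence
\[
|T_{\alpha,b}^m f(x)|\leq \|b\|_{\Lambda(\delta)}^m\int_{\mathbb R^n}|x-y|^{m\delta}|K_\alpha(x-y)|\,|f(y)|\,dy
=\|b\|_{\Lambda(\delta)}^m\,\widetilde T_\beta(|f|)(x),
\]
where $\widetilde T_\beta$ has kernel $\widetilde K_\beta(x):=|x|^{m\delta}|K_\alpha(x)|\geq 0$. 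So the theorem follows once I check that $\widetilde T_\beta$ is bounded from $L^p_w$ to $L^q_w$ for $w\in A_{p,q}$, $1/q=1/p-\beta/n$, $1<p<n/\beta$.

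Second, I would verify that $\widetilde K_\beta$ satisfies the size condition $S_\beta$: integrating over $s<|x|\leq 2s$ gives $\int |x|^{m\delta}|K_\alpha(x)|\,dx\leq (2s)^{m\delta}\int_{s<|x|\leq 2s}|K_\alpha(x)|\,dx\leq C s^{m\delta}s^\alpha=Cs^\beta$ by $S_\alpha$. One also needs a smoothness condition $H_{\beta,\infty}^*$ for $\widetilde K_\beta$; this is where the hypothesis $\delta<\eta$ and $\delta\le (n-\alpha)/m$ (equivalently $\beta<n$) enter. Writing $\widetilde K_\beta(x-y)-\widetilde K_\beta(x'-y)=|x-y|^{m\delta}(K_\alpha(x-y)-K_\alpha(x'-y))+(|x-y|^{m\delta}-|x'-y|^{m\delta})K_\alpha(x'-y)$, the first term is controlled using $H_{\alpha,\infty}^*$ by $|x-y|^{m\delta}|x-x'|^\eta|x-y|^{-(n-\alpha+\eta)}$, and the second using $||x-y|^{m\delta}-|x'-y|^{m\delta}|\lesssim |x-x'|^{m\delta}$ together with a size-type bound on $K_\alpha$ averaged dyadically. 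Actually, rather than re-deriving everything by hand, I would expect the paper to have already established (in the auxiliary section $\S\ref{Aux}$) that operators with kernels satisfying $S_\beta$ and $H_{\beta,\infty}^*$, or even just $S_\beta$, are $L^p_w$–$L^q_w$ bounded for $w\in A_{p,q}$; the cleanest route is to cite that lemma. If only $S_\beta$ is needed for the strong $(p,q)$ bound (via a good-$\lambda$ or Hedberg-type pointwise estimate $\widetilde T_\beta f(x)\lesssim (M f(x))^{1-\beta p/n}\|f\|_p^{\beta p/n}$ combined with the $A_{p,q}$ weighted maximal estimate), then the smoothness plays no role at the level of Theorem \ref{teo1} and is only reserved for the $BMO$/Lipschitz endpoint results.

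The main obstacle I anticipate is not the reduction itself but making sure the parameter constraints are exactly what is claimed: one needs $\beta=m\delta+\alpha<n$ for the fractional exponent to make sense and for $q<\infty$, which is precisely $\delta<(n-\alpha)/m$; and $p<n/\beta$ is the stated range. The condition $\delta<\eta$ is only relevant if one insists on propagating the Lipschitz regularity $H^*$ to $\widetilde K_\beta$ (so that $\eta':=\eta-m\delta>0$ or some positive Hölder exponent survives), which matters for later theorems but can be carried along harmlessly here. So concretely the proof is: (i) the pointwise domination $|T_{\alpha,b}^mf|\le \|b\|_{\Lambda(\delta)}^m\widetilde T_\beta(|f|)$; (ii) $\widetilde K_\beta$ satisfies $S_\beta$ (and, if wanted, $H^*_{\beta,\infty}$) by the elementary estimates above; (iii) invoke the $L^p_w$–$L^q_w$ boundedness for such kernels with $w\in A_{p,q}$ from the auxiliary section; (iv) combine to get the stated inequality with constant $C\|b\|_{\Lambda(\delta)}^m$. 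I would present steps (i)–(ii) in detail and cite (iii), since that is presumably the workhorse lemma proved earlier in the paper.
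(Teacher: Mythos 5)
Your reduction in step (i) is clean and correct: from $b\in\Lambda(\delta)$ you get the pointwise domination
\[
|T_{\alpha,b}^m f(x)|\le \|b\|_{\Lambda(\delta)}^m\int_{\mathbb R^n}|x-y|^{m\delta}|K_\alpha(x-y)|\,|f(y)|\,dy
=\|b\|_{\Lambda(\delta)}^m\,\widetilde T_\beta(|f|)(x),
\]
with $\beta=m\delta+\alpha$, and it is also true that $\widetilde K_\beta(x)=|x|^{m\delta}|K_\alpha(x)|$ inherits $S_\beta$ from $S_\alpha$. The problem is step (iii): there is no lemma, in this paper or in the standard literature, asserting that a nonnegative convolution kernel satisfying $S_\beta$ alone yields an operator bounded from $L^p_w$ to $L^q_w$ for every $w\in A_{p,q}$. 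The size condition $S_\beta$ controls only the integral of $\widetilde K_\beta$ over dyadic annuli; it neither gives a pointwise bound $\widetilde K_\beta(z)\lesssim |z|^{\beta-n}$ (so you cannot compare $\widetilde T_\beta$ to $I_\beta$), nor lets you dominate $\widetilde T_\beta f$ by $M_\beta f$, since the mass on an annulus could be concentrated on a set of small measure. Your fallback — verifying $H^*_{\beta,\infty}$ for $\widetilde K_\beta$ — also breaks down: after writing the difference as
\[
|x-y|^{m\delta}\bigl(K_\alpha(x-y)-K_\alpha(x'-y)\bigr)+\bigl(|x-y|^{m\delta}-|x'-y|^{m\delta}\bigr)K_\alpha(x'-y),
\]
the first term is fine via $H^*_{\alpha,\infty}$, but the second requires the pointwise bound $|K_\alpha(x'-y)|\lesssim |x'-y|^{\alpha-n}$, which is not part of the hypotheses (only $S_\alpha$ is, and that is an averaged condition). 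So you cannot cite a ``workhorse lemma'' here because the workhorse does not exist in the generality you need; for the specific $T_\alpha=I_\alpha$ your reduction does go through (there $\widetilde T_\beta=I_\beta$), but the theorem covers general $K_\alpha\in S_\alpha\cap H^*_{\alpha,\infty}$.

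The paper's actual proof is structurally different and avoids bounding $\widetilde T_\beta$ entirely: it proceeds by induction on $m$, using the decomposition
$T_{\alpha,b}^m f=\sum_{j=0}^{m-1}C_{j,m}(b-b_{2B})^{m-j}T_{\alpha,b}^j f+T_\alpha((b-b_{2B})^m f)$
to obtain a sharp maximal function estimate (Lemma~\ref{pointwise}\,\ref{nucleosPR})
\[
M^\sharp_{0,\gamma}(T_{\alpha,b}^m f)(x)\lesssim \|b\|_{\Lambda(\delta)}^m
\Bigl(\sum_{j=0}^{m-1} M_{\theta_j,\gamma}(|T_{\alpha,b}^j f|)(x)+M_{\theta_0+\alpha}f(x)\Bigr),
\]
with $\theta_j=(m-j)\delta$. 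The Fefferman--Stein inequality \eqref{FeffSt} then transfers the problem to fractional maximal operators, whose weighted $L^p$--$L^q$ boundedness is known for $A_{p,q}$ weights; chaining through auxiliary exponents $p_j$ and invoking the inductive hypothesis closes the argument. The smoothness condition $H^*_{\alpha,\infty}$ is used precisely in the $M^\sharp$ estimate (to control the ``far part'' $T_\alpha((b-b_{2B})^m f_2)$), rather than propagated to a new kernel. If you want to repair your proof, the right move is not to try to bound $\widetilde T_\beta$ directly, but to prove the pointwise $M^\sharp$ estimate and then run the good-$\lambda$/Fefferman--Stein plus induction machinery as the paper does.
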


\begin{rem}
When $m=0$ and $T_{\alpha}=I_{\alpha}$, the result above was proved
in \cite{MW}.  Notice that there are no symbols or parameters
$\delta$ in the hypothesis in this case.
\end{rem}


The next result gives the continuity properties of $T_{\alpha,b}^{m}$
between weighted Lebesgue spaces and
$\mathbb{L}_{w}(\widetilde{\delta})$ spaces. By $\beta^+$ we understand $\beta$ if $\beta>0$ and $0$ is $\beta\leq 0$.

{ \begin{thm}\label{boundednessLipw}Let $0<\alpha<n$, and
$0<\delta<\min\{\eta,(n-\alpha)/m\} $. Let $n/(m\delta+\alpha)\le
r<n/(\alpha+(m-1)\delta)$, if $m\ge 1$ or $n/\alpha\le
r<n/(\alpha-\eta)^+$, if $m=0$. Let $\widetilde{\delta}=m\delta
+\alpha-\frac{n}{r}$ and $b\in\Lambda(\delta)$. If $w\in
A_{r,\infty}$, then there exists a positive constant $C$ such that
   \begin{equation*}
     \norm{T_{\alpha,b}^{m}f}_{\mathbb{L}_{w}(\widetilde{\delta})}\leq C \norm{b}_{\Lambda(\delta)}^m\norm{fw}_{L^{r}}
   \end{equation*}
  for every $f\in L^r_w(\mathbb R^n)$.
 \end{thm}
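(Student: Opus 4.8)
The goal is to bound the $\mathbb{L}_w(\widetilde\delta)$-norm of $T^m_{\alpha,b}f$, so I fix a ball $B=B(x_0,R)$ and must estimate $\frac{\|w\chi_B\|_\infty}{|B|^{1+\widetilde\delta/n}}\int_B |T^m_{\alpha,b}f(x)-(T^m_{\alpha,b}f)_B|\,dx$ uniformly in $B$. The standard device is to subtract a well-chosen constant $c_B$ instead of the exact average $(T^m_{\alpha,b}f)_B$ (legitimate since $\int_B|g-g_B|\le 2\int_B|g-c_B|$), and to split $f=f_1+f_2$ with $f_1=f\chi_{2B}$ and $f_2=f\chi_{(2B)^c}$. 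For the local part I would take the corresponding piece of $c_B$ to be $0$ and simply bound $\int_B |T^m_{\alpha,b}f_1|$; for the global part the natural choice is $c_B^{\mathrm{glob}}=\int_{(2B)^c}(b(x_0)-b(y))^m K_\alpha(x_0-y)f(y)\,dy$, i.e. the global integral evaluated at the center.

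For the local term, I would expand $(b(x)-b(y))^m$ and use $b\in\Lambda(\delta)$ to dominate $|b(x)-b(y)|^m\le \|b\|_{\Lambda(\delta)}^m|x-y|^{m\delta}$, which absorbs into the kernel and, via the size condition $S_\alpha$ (summed dyadically over annuli inside $2B$), reduces matters to controlling $\int_B I_{m\delta+\alpha}(|f_1|)(x)\,dx$ where $I_{m\delta+\alpha}$ is the fractional integral of order $m\delta+\alpha$. Applying Hölder on $B$ with exponent $r$ and then a fractional-integral/maximal estimate together with the $A_{r,\infty}$ condition on $w$ (which says $w^{-r'}\in A_1$), one gets a bound of the form $C\|b\|_{\Lambda(\delta)}^m\, |B|^{1+\widetilde\delta/n}\|w\chi_B\|_\infty^{-1}\|fw\|_{L^r}$, exactly matching the normalization. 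The requirement $m\delta+\alpha<n$ (i.e. $\delta<(n-\alpha)/m$) guarantees $I_{m\delta+\alpha}$ makes sense, and the lower bound $r\ge n/(m\delta+\alpha)$ makes $\widetilde\delta\ge 0$.

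For the global term I must estimate $\int_B |T^m_{\alpha,b}f_2(x)-c_B^{\mathrm{glob}}|\,dx$. Inside, for $x\in B$, I write
\[
(b(x)-b(y))^m K_\alpha(x-y)-(b(x_0)-b(y))^m K_\alpha(x_0-y)
\]
and expand the difference of $m$-th powers as a telescoping sum, so each term carries either a factor $b(x)-b(x_0)$ (bounded by $\|b\|_{\Lambda(\delta)}R^\delta$, with the remaining powers $(b(\cdot)-b(y))$ of total degree $m-1$ giving $|z-y|^{(m-1)\delta}\lesssim (|x_0-y|)^{(m-1)\delta}$ for $y\in(2B)^c$) times $K_\alpha(x-y)$, or a factor $(b(x_0)-b(y))^m$ times the kernel difference $K_\alpha(x-y)-K_\alpha(x_0-y)$, which by $H^*_{\alpha,\infty}$ is $\lesssim R^\eta|x_0-y|^{\alpha-n-\eta}$. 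Splitting $(2B)^c$ into dyadic annuli $2^{j+1}B\setminus 2^jB$ and using $S_\alpha$ on each, the first type of term sums like $\sum_j (2^jR)^{(m-1)\delta+\alpha-n}R^\delta\int_{2^{j+1}B}|f|$, and the second like $\sum_j (2^jR)^{\alpha-n-\eta}R^\eta\int_{2^{j+1}B}|f|$; in both, Hölder with exponent $r$ on each annulus and the $A_{r,\infty}$ bound on $w$ turn $\int_{2^{j+1}B}|f|$ into $\lesssim (2^jR)^{n/r'}\|w\chi_{2^{j+1}B}\|_\infty^{-1}\|fw\|_{L^r}$, and then the geometric series converges precisely because the exponents $(m-1)\delta+\alpha-n/r<0$ (from $r<n/(\alpha+(m-1)\delta)$) and, when $m=0$, $\alpha-\eta-n/r<0$ (from $r<n/(\alpha-\eta)^+$), while $\|w\chi_{2^{j+1}B}\|_\infty^{-1}\le\|w\chi_B\|_\infty^{-1}$ since $w\chi_B\le w\chi_{2^{j+1}B}$. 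Collecting the powers of $R$ and of $\|w\chi_B\|_\infty$ reproduces the factor $|B|^{1+\widetilde\delta/n}\|w\chi_B\|_\infty^{-1}$ and closes the estimate.

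The main obstacle, and where the bulk of the work lies, is the bookkeeping in the global term: correctly expanding $(b(x)-b(y))^m-(b(x_0)-b(y))^m$ so that each summand isolates exactly one "good" factor (either a $\delta$-Hölder increment of $b$ along $x\mapsto x_0$, or the $\eta$-smoothness of $K_\alpha$), keeping track that the residual powers of $b$-differences contribute $(2^jR)^{k\delta}$ rather than anything worse, and then verifying that after applying $A_{r,\infty}$ on every annulus the resulting series in $j$ converges — which is exactly what pins down the upper restrictions on $r$ in the statement. The auxiliary lemmas of \S\ref{Aux} presumably package the $A_{r,\infty}$/dyadic-annulus estimate $\int_{2^{j+1}B}|f|\lesssim (2^jR)^{n/r'}\|w\chi_{2^{j+1}B}\|_\infty^{-1}\|fw\|_{L^r}$ and the fractional-integral bound used locally, so invoking them should streamline both halves.
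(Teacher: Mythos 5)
Your proposal follows essentially the same strategy as the paper's proof: split $f=f_1+f_2$ with $f_1=f\chi_{2B}$, subtract a constant built from the global piece, handle the local part via the size condition $S_\alpha$ together with the Lipschitz bound on $b$, and estimate the global oscillation by adding and subtracting to isolate a kernel-difference term (controlled by $H^*_{\alpha,\infty}$) and a difference-of-$m$-th-powers term (controlled by $\Lambda(\delta)$ and $S_\alpha$), with the $A_{r,\infty}$ condition and the upper bound on $r$ producing the convergent geometric series. The paper subtracts $(T^m_{\alpha,b}f_2)_B$ and estimates $|T^m_{\alpha,b}f_2(x)-T^m_{\alpha,b}f_2(y)|$ for arbitrary $x,y\in B$ rather than comparing against the center $x_0$, and for the local part it uses Tonelli plus Lemma \ref{Salfabola} directly rather than phrasing it through $I_{m\delta+\alpha}$ (which cannot be invoked pointwise since $S_\alpha$ is only an integral/annular size condition), but these are cosmetic variations of the same argument.
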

}

\smallskip

\begin{rem}
When $m=1$, $w=1$ and $T_{\alpha}=I_{\alpha}$, the result above was proved in \cite{MY} in the general context of non-doubling measures.
\end{rem}
\begin{rem}
If $r=n/(m\delta+\alpha)$, then $\widetilde{\delta}=0$ and the space
$\mathbb{L}_{w}(\widetilde{\delta})$ is the weighted version of
the $BMO$ spaces introduced in \cite{MW}. By taking into account the
range of $p$ in Theorem $\ref{teo1}$, it is clear that this is the
endpoint value from which the Lebesgue spaces change into $BMO$ and
Lipschitz spaces when the commutator acts. Particularly, if $m=0$
and $T_{\alpha}=I_{\alpha}$, this is the well-known result proved in
\cite{MW}. Notice again that there are no parameters $\delta$ or
symbols in this case. \\On the other hand, if $m=0$,
$T_{\alpha}=I_{\alpha}$ and $n/\alpha\le r <n/(\alpha-1)^+$, that is,
$\eta=1$ in the definition of the class $H_{\alpha, \infty}^*$, the
result above was proved in \cite{Pradopolonia}.
\end{rem}


For the extreme value $r=n/((m-1)\delta+\alpha)$, $m\in \mathbb N$
and $0< \delta<\eta\leq1$, we obtain the following endpoint result
in order to characterize the symbol $b$ in $\Lambda(\delta)$ in
terms of the boundedness of $T_{\alpha,b}^m$ in the sense of Theorem
\ref{boundednessLipw}. In order to give this result we introduce
some previous notation. For $k=0,1,...,m$ we denote
$c_{k}={m!}/{(k!(m-k)!)}$. If also $x$, $u\in \mathbb R^n$, we denote $S
(x,u,k)=(b(x)-b_{B})^{m-k}T_{\alpha}((b-b_{B})^{k}f_{2})(u)$, where $f_2=f\chi_{\mathbb R^n\setminus B}$ for a given ball $B$ and a locally integrable function $f$.

\begin{thm}\label{limitcase}Let $m\in \mathbb N$, $0< \delta<\min\{\eta,(n-\alpha)/m\}$ and $r=n/((m-1)\delta+\alpha)$.
If $w\in A_{n/(m\delta+\alpha), \infty}$ and $b\in \Lambda(\delta)$,
the following statements are equivalent.
\begin{enumerate}[label=(\roman*), leftmargin=0cm,itemindent=.6cm,labelwidth=\itemindent,labelsep=0cm,align=left]
        \item \label{ecu1-m>1} $T_{\alpha,b}^{m}:L^r_w(\mathbb R^n)(\mathbb R^n)\hookrightarrow \mathbb{L}_{w}(\delta)$;
        \item There exists a positive constant $C$ such that
        \begin{equation}\label{ecu2-m>1}
        \frac{\norm{w\chi_B}_{\infty}}{|B|^{1+\frac{\delta}{n}}}
        \int_B\abs{\sum_{k=0}^{m}c_{k}\left[S(x,u,k)-(S(\cdot,u,k))_{B}\right]}dx\leq C \norm{fw}_{r},
        \end{equation}
        for every ball $B\subset \mathbb{R}^{n}$, $x, u\in B$ and $f\in L^{1}_{{\rm loc}}(\mathbb
        R^n)$.
\end{enumerate}
\end{thm}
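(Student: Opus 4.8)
The plan is to prove the equivalence by establishing each implication separately, using the endpoint boundedness results already available. The key identity throughout is the standard algebraic decomposition of the commutator: writing $f=f_1+f_2$ with $f_1=f\chi_B$ and $f_2=f\chi_{\mathbb R^n\setminus B}$, and using the binomial expansion
\[(b(x)-b(y))^m=\sum_{k=0}^m c_k (b(x)-b_B)^{m-k}(b_B-b(y))^k,\]
one obtains
\[T_{\alpha,b}^m f(x)=\sum_{k=0}^m c_k (b(x)-b_B)^{m-k}(-1)^k T_\alpha\big((b-b_B)^k f\big)(x).\]
Splitting each inner piece into the contribution of $f_1$ and $f_2$, the term coming from $f_2$ is exactly $\sum_{k=0}^m c_k S(x,x,k)$ in the notation of the statement, while the terms coming from $f_1$ involve $T_\alpha((b-b_B)^k f_1)$ with a compactly supported argument, to which one can apply the $L^p$--Lipschitz and $L^p$--$BMO$ type estimates of Theorems \ref{teo1} and \ref{boundednessLipw} on the ball $B$.

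\smallskip

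\textbf{Proof that \ref{ecu1-m>1} $\Rightarrow$ \eqref{ecu2-m>1}.} Assume $T_{\alpha,b}^m$ maps $L^r_w(\mathbb R^n)$ into $\mathbb L_w(\delta)$. Fix a ball $B$ and $f\in L^1_{\rm loc}(\mathbb R^n)$; by a routine truncation argument we may assume $f$ is such that all the integrals below converge. For $x,u\in B$ one has the pointwise identity
\[\sum_{k=0}^m c_k\big[S(x,u,k)-(S(\cdot,u,k))_B\big]
= \Big(T_{\alpha,b}^m f_2(x)-(T_{\alpha,b}^m f_2)_B\Big)
-\sum_{k=0}^m c_k(-1)^k\big[(b(\cdot)-b_B)^{m-k}T_\alpha((b-b_B)^k f_1)\big]\Big|_{x}^{\text{osc}},
\]
where the last bracket denotes the oscillation over $B$ of the indicated function and we have used that $S(x,u,k)$ does not depend on $u$ once $u\in B$ is dropped inside the oscillation in $x$ — more precisely, since $T_\alpha((b-b_B)^k f_2)$ is, as a function of the outer variable restricted to $B$, a well-defined function, one replaces $S(x,u,k)$ by $(b(x)-b_B)^{m-k}T_\alpha((b-b_B)^k f_2)(x)$ and this costs nothing in the oscillation. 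Hence, taking $\frac{\norm{w\chi_B}_\infty}{|B|^{1+\delta/n}}\int_B|\cdot|\,dx$,
\[\frac{\norm{w\chi_B}_\infty}{|B|^{1+\frac\delta n}}\int_B\Big|\sum_{k=0}^m c_k[S(x,u,k)-(S(\cdot,u,k))_B]\Big|dx
\le \norm{T_{\alpha,b}^m f}_{\mathbb L_w(\delta)}+\sum_{k=0}^m c_k\,\mathrm{I}_k,\]
where $\mathrm I_k$ is the $\mathbb L_w(\delta)$-oscillation functional applied to $(b-b_B)^{m-k}T_\alpha((b-b_B)^k f_1)$ on $B$. For each $k$ one controls $\mathrm I_k$ by pulling out $\norm{(b-b_B)^{m-k}\chi_B}_\infty\le C\norm b_{\Lambda(\delta)}^{m-k}|B|^{(m-k)\delta/n}$ and applying Theorem \ref{teo1} (or Theorem \ref{boundednessLipw} when $k=m$, using $w\in A_{n/(m\delta+\alpha),\infty}$) to $T_\alpha((b-b_B)^k f_1)$ with the exponent forced by the scaling $r=n/((m-1)\delta+\alpha)$; a Hölder inequality together with $\norm{(b-b_B)^k f_1 w}$ $\lesssim \norm b_{\Lambda(\delta)}^k|B|^{k\delta/n}\norm{fw\chi_B}$ and the doubling of $\norm{w\chi_B}_\infty$ closes the estimate, giving $\mathrm I_k\le C\norm b_{\Lambda(\delta)}^m\norm{fw}_r$. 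Since $\norm{T_{\alpha,b}^m f}_{\mathbb L_w(\delta)}\le C\norm{fw}_r$ by hypothesis, \eqref{ecu2-m>1} follows.

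\smallskip

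\textbf{Proof that \eqref{ecu2-m>1} $\Rightarrow$ \ref{ecu1-m>1}.} Conversely, given $f\in L^r_w(\mathbb R^n)$ and a ball $B$, write $f=f_1+f_2$ as above. Using the identity in the first paragraph,
\[\frac{\norm{w\chi_B}_\infty}{|B|^{1+\frac\delta n}}\int_B|T_{\alpha,b}^m f(x)-(T_{\alpha,b}^m f)_B|\,dx
\le \frac{\norm{w\chi_B}_\infty}{|B|^{1+\frac\delta n}}\int_B\Big|\sum_{k=0}^m c_k[S(x,u,k)-(S(\cdot,u,k))_B]\Big|dx
+\sum_{k=0}^m c_k\,\mathrm I_k',\]
where $\mathrm I_k'$ is, again, the oscillation functional on $B$ of $(b-b_B)^{m-k}T_\alpha((b-b_B)^k f_1)$; the first term on the right is $\le C\norm{fw}_r$ by \eqref{ecu2-m>1} applied with, say, $u$ the center of $B$, and each $\mathrm I_k'$ is estimated exactly as the $\mathrm I_k$ above. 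Taking the supremum over all balls $B$ yields $\norm{T_{\alpha,b}^m f}_{\mathbb L_w(\delta)}\le C\norm b_{\Lambda(\delta)}^m\norm{fw}_r$, which is \ref{ecu1-m>1}.

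\smallskip

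\textbf{Main obstacle.} The delicate point is the treatment of the local terms $\mathrm I_k$ (equivalently $\mathrm I_k'$): one needs the inner exponent produced by applying Theorems \ref{teo1} and \ref{boundednessLipw} to $T_\alpha((b-b_B)^k f_1)$ to match, after Hölder on $B$ and after absorbing the powers $|B|^{(m-k)\delta/n}$ from $\norm{(b-b_B)^{m-k}\chi_B}_\infty$, precisely the scaling $|B|^{1+\delta/n}\norm{w\chi_B}_\infty^{-1}$ that appears in the definition of $\mathbb L_w(\delta)$; it is exactly here that the endpoint choice $r=n/((m-1)\delta+\alpha)$ and the hypothesis $w\in A_{n/(m\delta+\alpha),\infty}$ (so that the $A_{p,q}$ or $A_{r,\infty}$ conditions needed for those theorems hold at the relevant exponents) are used, and one must check that $0<\delta<\min\{\eta,(n-\alpha)/m\}$ keeps all exponents in the admissible ranges. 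The auxiliary lemmas of \S\ref{Aux} — in particular the doubling of $B\mapsto\norm{w\chi_B}_\infty$ and the standard estimate $\norm{(b-b_B)\chi_B}_\infty\lesssim\norm b_{\Lambda(\delta)}|B|^{\delta/n}$ — are what make this bookkeeping go through.
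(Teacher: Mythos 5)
Your proposal follows the paper's overall structure (split $f=f_1+f_2$, use the binomial expansion of $(b(x)-b(y))^m$ around $b_B$, handle the local and far pieces separately), but it has a genuine gap in the treatment of the far term, and it obscures a nontrivial estimate by an incorrect claim.

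The problem is in the sentence ``one replaces $S(x,u,k)$ by $(b(x)-b_B)^{m-k}T_\alpha\big((b-b_B)^k f_2\big)(x)$ and this costs nothing in the oscillation.'' That replacement does have a cost: the difference is precisely
\[\sigma_2(x,u,k):=(b(x)-b_B)^{m-k}\left(T_\alpha\big((b-b_B)^k f_2\big)(x)-T_\alpha\big((b-b_B)^k f_2\big)(u)\right),\]
whose oscillation over $B$ is \emph{not} negligible and cannot be dismissed. Controlling this term is where the kernel regularity $H_{\alpha,\infty}^*$ enters decisively. In the paper this is exactly the content of Lemma~\ref{lema m>1}, which yields the pointwise bound
\[\abs{T_{\alpha}((b-b_B)^{k}f_{2})(x)-T_{\alpha}((b-b_B)^{k}f_{2})(u)}\lesssim\frac{\norm{b}_{\Lambda(\delta)}^{k}\norm{fw}_{r}\,|B|^{\delta(k-m+1)/n}}{\norm{w\chi_B}_{\infty}}\quad\text{for }x,u\in B,\]
and then Lemma~\ref{puntualLip} to average $\abs{b(x)-b_B}^{m-k}$ over $B$. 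Without this estimate (or an equivalent kernel-smoothness argument summing over dyadic annuli and using $w\in A_{r,\infty}$ with $\delta<\eta$ to make the geometric series converge), the equivalence between \ref{ecu1-m>1} and \eqref{ecu2-m>1} does not follow: the two quantities differ by exactly the oscillation of $\sum_k c_k\,\sigma_2(\cdot,u,k)$, which needs its own proof.

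A secondary point: for the local term, you further binomially decompose $T_{\alpha,b}^m f_1$ and try to apply Theorems~\ref{teo1}/\ref{boundednessLipw} to $T_\alpha((b-b_B)^k f_1)$ while pulling out $\norm{(b-b_B)^{m-k}\chi_B}_\infty$ from an oscillation; this is imprecise, since the oscillation of a product does not factor that way. The paper avoids this by keeping $\sigma_1=T_{\alpha,b}^m f_1$ intact and applying Theorem~\ref{teo1} directly at an exponent pair $(p,q)$ obtained from the openness of $A_{r,\infty}$, then two uses of H\"older (one for $q,q'$, one for the remaining $L^r$-factor) to recover the scaling $|B|^{\delta/n}\norm{w\chi_B}_\infty^{-1}\norm{fw}_r$. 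You should adopt that cleaner route for the local piece and, above all, supply the missing estimate for the $\sigma_2$ oscillation.
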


\begin{rem}
When $m=1$, $w=1$ and $T_{\alpha}=I_{\alpha}$ the result above was
proved in \cite{MY} in a more general context of non-homogeneous
spaces. Certainly, their result was inspired in the article of
\cite{HST}, where the same result is proved for $m=1$, $w=1$, $T_{\alpha}=I_{\alpha}$ and $b\in BMO$.
\end{rem}

\begin{rem}
In \cite{HST} the authors also have obtained that, in the case of $T_{\alpha}=I_{\alpha}$, $b\in BMO$, $m=1$ and $w=1$, the boundedness of the commutator $I_{\alpha,b}$ from $L^{n/\alpha}$ into $BMO$ can only occur if $b$ is constant. In our case, if $b\in \Lambda(\delta)$, when  $T_{\alpha}=I_{\alpha}$, $m=1$ and $w=1$, we can deduce, by
\eqref{ecu2-m>1}, that, if $I_{\alpha,b}$ is bounded from $L^{n/\alpha}$ into $\mathbb{L}(\delta)$, then
\begin{equation*}
\frac{1}{|B|^{1+\frac{\delta}{n}}}
\int_B\abs{\sum_{k=0}^{1}c_{k}\left[S(x,u,k)-(S(\cdot,u,k))_{B}\right]}dx\leq C \norm{f}_{n/\alpha}.
\end{equation*}
Since it is easy to see that
\begin{equation*}
\frac{1}{|B|^{1+\frac{\delta}{n}}}
\int_B\abs{\sum_{k=0}^{1}c_{k}\left[S(x,u,k)-(S(\cdot,u,k))_{B}\right]}dx=\frac{1}{\abs{B}^{1+\frac{\delta}{n}}}\int_{B}\abs{b(x)-b_{B}}dx
\abs{\int_{(2B)^{c}}\frac{f(y)}{\abs{u-y}^{n-\alpha}}dy},
\end{equation*}
we have that
\begin{equation*}
\frac{1}{\abs{B}^{1+\frac{\delta}{n}}}\int_{B}\abs{b(x)-b_{B}}dx
\abs{\int_{(2B)^{c}}\frac{f(y)}{\abs{u-y}^{n-\alpha}}dy}\leq C \norm{f}_{n/\alpha}.
\end{equation*}

Following the same guidelines as in \cite{HST} with $f_{N}(y)=\abs{u-y}^{-\alpha}\chi_{B(0,N)}(u-y)\chi_{(2B)^{c}}(y)$ for $N\in\mathbb{N}$, we obtain that
\begin{equation*}
\frac{1}{\abs{B}^{1+\frac{\delta}{n}}}\int_{B}\abs{b(x)-b_{B}}dx
\abs{\int_{(2B)^{c}\cup \{\abs{u-y}<N\}}\frac{dy}{\abs{u-y}^{n}}}^{1-\alpha/n}\leq C.
\end{equation*}
Due tu the fact that  $\abs{\int_{(2B)^{c}\cup \{\abs{u-y}<N\}}\frac{dy}{\abs{u-y}^{n}}}^{1-\alpha/n}\to \infty$ when $N\to \infty$, we have  $b(x)=b_{B}$ almost everywhere, for every ball $B$, which yields that $b$ is essentially constant.
\end{rem}

\subsection{Fractional integral operators with H\"ormander type
regularity}\label{subsechormander}

We now introduce the conditions on the kernel that will be considered in this section. First, we must give some notation.

It is well-known that the commutators of fractional integral operators can be controlled, in some sense, by maximal type operators associated to Young functions. By a Young function we mean a function $\Phi:[0,\infty)\rightarrow [0,\infty)$ that
    is increasing, convex and verifies $\Phi(0)=0$ and $\Phi(t)\rightarrow \infty$ when
    $t\rightarrow \infty$. The $\Phi$-Luxemburg average over a ball $B$ is defined, for a locally integrable function $f$, by
    \begin{equation*}
    \|f\|_{\Phi,B}=\inf\left\{\lambda >0: \frac{1}{|B|}\int_B \Phi\left(\frac{|f(x)|}{\lambda}\right)dx\leq 1\right\}.
    \end{equation*}

    The maximal type operators that control the commutators involve these averages. More precisely, if $f\in
    L^1_{{\rm loc}}(\mathbb R^n)$ and $0< \alpha<n$, we define the
    fractional type maximal operator associated to a Young function
    $\Phi$, by
    \begin{equation}\label{maxfrac}
    M_{\alpha, \Phi}f(x)=\sup\limits_{B\ni x} |B|^{{\alpha}/{n}}
    \|f\|_{\Phi,B},
    \end{equation}
    where the supremum is taken over every ball $B$ that contains $x\in \mathbb R^n$.

    Given a Young function $\Phi$, the following H\"older's type inequality holds for every pair of measurable functions $f,g$
    \[\frac{1}{|B|}\int_{B} |f(x)g(x)| dx \leq 2 \|f\|_{\Phi,B}\|g\|_{\widetilde{\Phi},B},\]
    where $\widetilde{\Phi}$ is the complementary Young function of
    $\Phi$, defined by
    \begin{equation*}
    \widetilde{\Phi}(t)= \sup_{s>0}\{st-\phi(s)\}.
    \end{equation*}
    It is easy to see that $t\leq
    \Phi^{-1}(t)\widetilde{\Phi}^{-1}(t)\leq 2t$ for every $t>0$.

Moreover, given $\Phi,\Psi$ and $\Theta$ Young functions verifying that $\Phi^{-1}(t)\Psi^{-1}(t)\lesssim\Theta^{-1}(t)$ for every $t>0$, the following generalization holds
    \[\|fg\|_{\Theta,B}\lesssim \|f\|_{\Phi,B}\|g\|_{\Psi,B}.\]

We are now in position to define the smoothness condition on $K_\alpha$.

We say that $K_{\alpha}\in H_{\alpha, \Phi}$ if
there exist $c\ge 1$ and $C>0$ such that for every $y\in \mathbb
R^n$ and $R>c|y|$
\begin{equation*}
\sum\limits_{j=1}^\infty (2^j R)^{n-\alpha} \|\left(K_{\alpha}(\cdot-y)-K_{\alpha}(\cdot)\right)\chi_{|\cdot|\sim 2^j R}\|_{\Phi, B(0,2^{j+1}R)}  \leq C,
\end{equation*}
where $|\cdot|\sim s$ means the set $\{x\in \mathbb R^n: s<|x|\leq 2s\}$.

When $\Phi(t)=t^q$, $1\leq q<\infty$, we denote this class by
$H_{\alpha,q}$ and it can be written as
\begin{equation*}
\sum\limits_{j=1}^\infty (2^j R)^{n-\alpha}  \left(\frac{1}{(2^j
R)^n}\int_{|x|\sim 2^j R}|K_{\alpha}(x-y)-K_{\alpha}(x)|^q
dx\right)^{1/q} \leq C.
\end{equation*}

The kernels given above are, a priori, less regular than the kernel
of the fractional integral operator $I_\alpha$ and they have been
studied by several authors. For example, in \cite{Ku}, the author
studied fractional integrals given by a multiplier. If
$m:\mathbb{R}^n\rightarrow \mathbb{R}$ is a function, the multiplier
operator $T_m$ is defined, through the Fourier transform, as
$\widehat{T_mf}(\zeta)=m(\zeta) \widehat{f}(\zeta)$ for $f$ in the
Schwartz class. Under certain conditions on the derivatives of $m$,
the multiplier operator $T_m$ can be seen as the limit of
convolution operators $T_m^N$, having a simpler form. Their
corresponding kernels $K_\alpha^N$ belong to the class $S_\alpha\cap
H_{\alpha,r}$ with constant independent of $N$, for certain values
of $r>1$ given by the regularity of the function $m$ (see
\cite{Ku}).

Other examples of this type of operators are fractional integrals
with rough kernels, that is, with kernel $K_\alpha(x)=\Omega(x)|x|^{\alpha-n}$
where $\Omega$ is a function defined on the unit sphere $S^{n-1}$ of
$\mathbb{R}^n$, extended to $\mathbb{R}^n\setminus \{0\}$ radially.
The function $\Omega$ is an homogeneous function of degree $0$. In
\cite[Proposition~4.2]{BLR}, the authors showed that $K_\alpha\in
S_\alpha\cap H_{\alpha,\Phi}$, for certain Young function $\Phi$,
provided that $\Omega\in L^{\Phi}(S^{n-1})$ with
\[\int_0^1 \omega_\Phi(t) \frac{dt}{t}<\infty,\]
    where $\omega_\Phi$ is the $L^\Phi$-modulus of continuity given by
    \[\omega_\Phi(t)=\sup\limits_{|y|\leq t}||\Omega(\cdot+y)-\Omega(\cdot)||_{\Phi,S^{n-1}}<\infty,\]
 for every $t\geq 0$. This type of operators where also studied in \cite{CWW} and \cite{DL}.

As we said previously, we are interested in studying the higher
order commutators of $T_\alpha$. Since we are dealing with symbols of Lipschitz type, the smoothness condition associated to these commutators is defined as follows.

\begin{defn}Let $m\in \mathbb N_0$, $0<\alpha<n$, $0\leq \delta<\min\{1, (n-\alpha)/m\}$ and let $\Phi$ be a Young function. We say that $K_\alpha\in H_{\alpha,\Phi,m}(\delta)$ if
    \begin{equation*}
    \sum\limits_{j=1}^\infty (2^j)^{m\delta}(2^j R)^{n-\alpha} \|\left(K_{\alpha}(\cdot-y)-K_{\alpha}(\cdot)\right)\chi_{|\cdot|\sim 2^j R}\|_{\Phi, B(0,2^{j+1}R)}  \leq C.
    \end{equation*}
for some constants $c\ge 1$ and $C>0$ and for every $y\in \mathbb   R^n$ with $R>c|y|$.
\end{defn}

Clearly, when $\delta=0$ or $m=0$, $H_{\alpha,\Phi,m}(\delta)=H_{\alpha,\Phi}$.

\begin{rem}\label{contentionsHdelta}
    It is easy to see that $H_{\alpha,\Phi,m}(\delta_2)\subset H_{\alpha,\Phi,m}(\delta_1) \subset H_{\alpha,\Phi}$ whenever $0\leq \delta_1<\delta_2<\min\{1,(n-\alpha)/m\}$.
\end{rem}

Recall that Fourier multipliers and fractional integrals with rough kernels are examples of fractional integral operators with $K_\alpha\in H_{\alpha,\Phi}$ for certain Young function. By assuming adequate conditions depending on $\delta$ on the multiplier $m$, or on the $L^\Phi$-modulus of continuity $\omega_\Phi$, one can obtain kernels $K_\alpha\in H_{\alpha,\Phi,m}(\delta)$. This fact can be proved by adapting Proposition 4.2 and Corollary 4.3 given in \cite{BLR} (see also \cite{LMRT}).

We shall also deal with a class of Young functions that arises in connection with the boundedness of the
fractional maximal operator $M_{\alpha,\Psi}$ on weighted Lebesgue spaces (see \S\ref{Aux}). Given $0<\alpha<n$, $1\leq \beta<p<n/\alpha$ and a Young function $\Psi$, we
shall say that $\Psi\in \mathcal B_{\alpha,\beta}$ if
$t^{-\alpha/n}\Psi^{-1}(t)$ is the inverse of a Young function and
$\Psi^{1+\frac{\rho\alpha}{n}}\in B_{\rho}$ for every
$\rho>n\beta/(n-\alpha\beta)$, that is, there exists a positive constant $c$
such that
\[\int_c^\infty \frac{\Psi^{1+\frac{\rho\alpha}{n}}(t)}{t^\rho} \frac{dt}{t}<\infty\]
for each of those values of $\rho$.

We now state the following
generalizations of Theorems \ref{teo1} and \ref{boundednessLipw}. We shall consider again $m\in \mathbb N_0$.
%
\begin{thm}\label{teo1H} Let $0<\alpha<n$ and
$0< \delta<\min\{1,(n-\alpha)/m\} $. Let $1<p<n/(m\delta+\alpha)$,
$1/q=1/p-(m\delta+\alpha)/n$ and $b\in \Lambda(\delta)$. Assume that $T_\alpha$ has a
kernel $K_\alpha\in S_\alpha\cap H_{\alpha, \Phi}$ for a Young
function $\Phi$ such that its complementary function
$\widetilde{\Phi}\in \mathcal B_{m\delta+\alpha,\beta}$ for some $1\leq \beta<p$. Then, if $w$ is
a weight verifying $w^\beta \in A_{\frac{p}{\beta},\frac{q}{\beta}}$, there
exists a positive constant $C$ such that
\[\left(\int_{\mathbb R^n} |T_{\alpha,b}^m f(x)|^q w(x)^q dx \right)^{1/q}\leq C \|b\|_{\Lambda(\delta)}^m
\left(\int_{\mathbb R^n} |f(x)|^p w(x)^p dx\right)^{1/p}\]
for every $f\in L^p_w(\mathbb
R^n)$.
        \end{thm}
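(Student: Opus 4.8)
\textbf{Proof proposal for Theorem \ref{teo1H}.}

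The plan is to reduce the $L^p_w$--$L^q_w$ boundedness of $T_{\alpha,b}^m$ to a pointwise domination by fractional maximal operators of the form $M_{m\delta+\alpha,\widetilde\Phi}$, and then invoke the weighted boundedness of such maximal operators (which, by the definition of the class $\mathcal B_{m\delta+\alpha,\beta}$ and the hypothesis $\widetilde\Phi\in\mathcal B_{m\delta+\alpha,\beta}$, holds on $L^{p}_w$--$L^{q}_w$ exactly when $w^\beta\in A_{p/\beta,q/\beta}$; this is the content of the auxiliary results announced for \S\ref{Aux}). First I would fix a ball $B=B(x_0,R)$ containing a point $x$ and split $f=f_1+f_2$ with $f_1=f\chi_{2B}$ and $f_2=f\chi_{(2B)^c}$, and expand the binomial $(b(x)-b(y))^m=\sum_{k=0}^m c_k (b(x)-b_B)^{m-k}(b_B-b(y))^k$ so that
\[
T_{\alpha,b}^m f(x)=\sum_{k=0}^m c_k (b(x)-b_B)^{m-k}\, T_\alpha\big((b_B-b(\cdot))^k f\big)(x).
\]
Each summand is handled by estimating $|b(x)-b_B|\lesssim \|b\|_{\Lambda(\delta)} R^\delta$ on $B$ and treating $T_\alpha((b_B-b)^k f_1)$ and $T_\alpha((b_B-b)^k f_2)$ separately.

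For the local part $f_1$, I would use that $K_\alpha\in S_\alpha\cap H_{\alpha,\Phi}$ implies, via a dyadic annular decomposition and the generalized Hölder inequality $\|fg\|_{\Theta,B}\lesssim\|f\|_{\Phi,B}\|g\|_{\Psi,B}$, the pointwise bound $|T_\alpha g(x)|\lesssim M_{\alpha,\widetilde\Phi}g(x)$; combined with $|(b_B-b(y))^k|\lesssim \|b\|_{\Lambda(\delta)}^k R^{k\delta}$ for $y\in 2B$ and $R\approx |B|^{1/n}$, this yields a bound by $\|b\|_{\Lambda(\delta)}^m M_{m\delta+\alpha,\widetilde\Phi}f(x)$ after summing the geometric factors (here one uses $m\delta<n-\alpha$ to keep exponents admissible). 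For the far part $f_2$, the key is the smoothness condition: writing $T_\alpha((b_B-b)^k f_2)(x)-T_\alpha((b_B-b)^k f_2)(x_0)$ is not available since we need a pointwise (not oscillation) estimate, so instead I would exploit directly that for $y\in(2B)^c$ one has $|b(y)-b_B|\lesssim\|b\|_{\Lambda(\delta)}|x_0-y|^\delta$ (a standard Lipschitz-average estimate, losing at most a $\log$ that is absorbed because $\delta>0$), and then dominate $\int_{(2B)^c}|K_\alpha(x-y)|\,|x_0-y|^{k\delta}|f(y)|\,dy$ by a sum over dyadic annuli $|x_0-y|\sim 2^jR$, on each of which $\|K_\alpha(\cdot)\chi_{|\cdot|\sim 2^jR}\|_{\Phi}$ is controlled using $S_\alpha$ together with, if needed, the $H_{\alpha,\Phi}$ cancellation against the value at $x_0$; this again collapses to $\|b\|_{\Lambda(\delta)}^m M_{m\delta+\alpha,\widetilde\Phi}f(x)$, the series converging precisely because $(2^j)^{k\delta-(n-\alpha)\cdot 0}$ is tamed by the $H_{\alpha,\Phi}$-type decay after pairing with the weights $(2^jR)^{n-\alpha}$.

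Having established $|T_{\alpha,b}^m f(x)|\lesssim \|b\|_{\Lambda(\delta)}^m M_{m\delta+\alpha,\widetilde\Phi}f(x)$ for a.e.\ $x$, I would conclude by applying the weighted norm inequality for $M_{m\delta+\alpha,\widetilde\Phi}$: since $\widetilde\Phi\in\mathcal B_{m\delta+\alpha,\beta}$ and $w^\beta\in A_{p/\beta,q/\beta}$ with $1\le\beta<p<n/(m\delta+\alpha)$ and $1/q=1/p-(m\delta+\alpha)/n$, the operator $M_{m\delta+\alpha,\widetilde\Phi}$ maps $L^p_w(\mathbb R^n)$ into $L^q_w(\mathbb R^n)$, which gives the claimed estimate. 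I expect the main obstacle to be the far-part estimate for $m\ge 1$: one must carefully handle the interplay between the Lipschitz growth $|x_0-y|^{k\delta}$ of the symbol factor, the size/smoothness of $K_\alpha$ encoded only through a Luxemburg average in $H_{\alpha,\Phi}$ (not a pointwise kernel bound), and the dyadic summation, making sure the extra factor $(2^j)^{m\delta}$ that appears is either absorbed by $H_{\alpha,\Phi,m}(\delta)$-type decay or — since here only $H_{\alpha,\Phi}$ is assumed — compensated by the gain $2^{-j(n-\alpha)}$ from the annuli combined with the constraint $m\delta<n-\alpha$. Getting the constant to be exactly $\|b\|_{\Lambda(\delta)}^m$ (and not a larger power) requires bookkeeping the $c_k$ and the powers of $R^\delta$ across both parts.
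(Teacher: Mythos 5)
Your plan hinges on establishing the \emph{pointwise} bound
\[
|T_{\alpha,b}^m f(x)|\lesssim \|b\|_{\Lambda(\delta)}^m\, M_{m\delta+\alpha,\widetilde\Phi}f(x)\qquad\text{for a.e. }x,
\]
and then applying Theorem \ref{BDPcteH}. That pointwise bound is false, already in the simplest case $m=0$, $T_\alpha=I_\alpha$, $\widetilde\Phi(t)=t$. The fractional integral is \emph{not} pointwise dominated by the fractional maximal function (the inequality goes the other way: $M_\alpha f\lesssim I_\alpha|f|$). This is exactly why the far-part/tail term cannot be closed in your scheme: once you throw away the cancellation $K_\alpha(\cdot-(x-x_0))-K_\alpha(\cdot)$ — as you explicitly do, because a purely pointwise estimate has no reference point — the size condition $S_\alpha$ alone gives $\sum_j (2^jR)^{\alpha}\fint_{2^{j}B}|f|$-type terms, whose sum is not controlled by $M_{m\delta+\alpha,\widetilde\Phi}f(x)$. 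The smoothness $H_{\alpha,\Phi}$ is a cancellation hypothesis and only buys you control of \emph{oscillations} $T_\alpha g(y)-T_\alpha g(x_B)$ over a ball, not of $T_\alpha g(x)$ itself. Your own remark that ``$T_\alpha(\cdots)(x)-T_\alpha(\cdots)(x_0)$ is not available since we need a pointwise estimate'' is precisely the point at which the argument cannot proceed.

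The paper resolves this by working at the level of the sharp maximal function $M^\sharp_{0,\gamma}$ and using the Fefferman–Stein inequality \eqref{FeffSt} to pass from an oscillation estimate to a norm estimate. The key Lemma \ref{pointwise}\ref{Hormanders} shows
\[
M^\sharp_{0,\gamma}(T_{\alpha,b}^m f)(x)\lesssim \|b\|_{\Lambda(\delta)}^m\Bigl(\textstyle\sum_{j=0}^{m-1} M_{\theta_j,\gamma}(|T_{\alpha,b}^j f|)(x)+M_{m\delta+\alpha,\widetilde\Phi}f(x)\Bigr),
\]
where $\theta_j=(m-j)\delta$, and the right-hand side still involves the \emph{lower-order} commutators $T_{\alpha,b}^j$. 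Consequently the paper also needs an induction on $m$, combining the boundedness of the auxiliary fractional maximal operators $M_{\theta_j,\gamma}$ with the inductive hypothesis at each step. Neither the sharp-function/Fefferman–Stein framework nor the induction appears in your proposal, and the single-step pointwise reduction you use instead is not valid. The final step — invoking Theorem \ref{BDPcteH} for $M_{m\delta+\alpha,\widetilde\Phi}$ under $\widetilde\Phi\in\mathcal B_{m\delta+\alpha,\beta}$ and $w^\beta\in A_{p/\beta,q/\beta}$ — is correct, but it only addresses the last term of the (missing) sharp-maximal estimate, not the whole operator.
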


\begin{rem}If we consider, for example, $\Phi(t)=e^{t^{1/\gamma}}-e$ with $\gamma>0$, then $\widetilde{\Phi}(t)\thickapprox t(1+\log^+ t)^\gamma$ and this function verifies condition $\mathcal B_{m\delta+\alpha,1}$. Thus, $\Phi$ satisfies the hypothesis of the theorem above and, in this case, we can take $w\in A_{p,q}$. As we have mentioned before, this condition $\mathcal B_{m\delta+\alpha,\beta}$ is related with the boundedness of the corresponding fractional maximal operator $M_{m\delta+\alpha, \widetilde{\Phi}}$ between $L^p_w$ and $L^q_w$ when $w^\beta\in A_{\frac{p}{\beta},\frac{q}{\beta}}$ (see Theorem \ref{BDPcteH} below). When $\beta>1$, a typical example is $\widetilde{\Phi}(t)=t^\beta(1+\log^+t)^\gamma$ for $\gamma\geq 0$.  In this case, the Young function $\Phi$ related with the smoothness condition on the kernel $K_\alpha$ given in the theorem above is $\Phi(t)=t^{\beta'}(1+\log^+ t)^{-\gamma/(\beta-1)}$, where $\beta'=\beta/(\beta-1)$.
    \end{rem}

\begin{thm}\label{HormanderLipw}Let $0<\alpha <n$,
$0< \delta<\min\{ 1,(n -\alpha)/m\}$ and $n/(m\delta+\alpha)\leq r<n/((m-1)\delta+\alpha)$ such that $\widetilde{\delta}=m\delta
+\alpha-\frac{n}{r}$. Let $w$ be a weight such that $w^\beta\in A_{r/\beta,\infty}$ for some $1<\beta<r$. Assume
that $T_\alpha$ has a kernel $K_\alpha\in S_\alpha\cap H_{\alpha,
\Phi,m}(\delta)$ for a Young function $\Phi$ such that $\Phi^{-1}(t)\lesssim t^{\frac{\beta-1}{r}}$ for every $t>0$. If
$b\in\Lambda(\delta)$, then there exists a positive constant $C$
such that
    \begin{equation*}
    \norm{T_{\alpha,b}^{m}f}_{\mathbb{L}_{w}(\widetilde{\delta})}\leq C \norm{b}_{\Lambda(\delta)}^{m}\norm{fw}_{L^{r}}
    \end{equation*}
    for every $f\in L^r_w(\mathbb R^n)$.
 \end{thm}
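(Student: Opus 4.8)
The plan is to estimate, for a fixed ball $B = B(x_0,R)$, the mean oscillation $|B|^{-1}\int_B |T_{\alpha,b}^m f(x) - c_B|\,dx$ for a suitable constant $c_B$, and then multiply by $\|w\chi_B\|_\infty/|B|^{\widetilde\delta/n}$ and take the supremum over $B$. First I would split $f = f_1 + f_2$ with $f_1 = f\chi_{2B}$ and $f_2 = f\chi_{(2B)^c}$, and expand the $m$-th power in the commutator via the binomial theorem: $(b(x)-b(y))^m = \sum_{k=0}^m c_k (b(x)-b_B)^{m-k}(b_B-b(y))^k$, so that $T_{\alpha,b}^m f(x) = \sum_{k=0}^m c_k (b(x)-b_B)^{m-k} T_\alpha\big((b-b_B)^k f\big)(x)$. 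For the local part $f_1$, I would control each term $(b(x)-b_B)^{m-k} T_\alpha((b-b_B)^k f_1)(x)$ pointwise by $\|b\|_{\Lambda(\delta)}^m R^{(m-k)\delta} R^{k\delta} M_{\alpha}(|f_1|)(x)$, i.e. ultimately by $\|b\|_{\Lambda(\delta)}^m R^{m\delta} M_\alpha f(x)$ on $B$, using that $|b(x)-b_B|\lesssim \|b\|_{\Lambda(\delta)} R^\delta$ for $x\in B$ and similarly inside the integral, and that the size condition $S_\alpha$ gives $|T_\alpha g(x)| \lesssim M_\alpha g(x)$ (this is the standard dyadic annuli summation against $S_\alpha$). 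Then $|B|^{-1}\int_B R^{m\delta} M_\alpha f \lesssim R^{m\delta} \big(|B|^{-1}\int_{2B} |f|^\beta\big)^{1/\beta} R^{\alpha} \cdot R^{-n/?}$; more precisely I would apply Hölder with exponent $r/\beta$ together with the $A_{r/\beta,\infty}$ condition on $w^\beta$ to bring in $\|w\chi_B\|_\infty$, arriving at a bound by $C\|b\|_{\Lambda(\delta)}^m \|fw\|_{L^r} |B|^{\widetilde\delta/n}/\|w\chi_B\|_\infty$, which is exactly what is needed. This is essentially the argument behind Theorem \ref{boundednessLipw} specialized to the size condition, and I expect it to be routine.

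The substantive part is the global piece $f_2$, where the H\"ormander-type regularity $H_{\alpha,\Phi,m}(\delta)$ enters. Here I would choose $c_B = \sum_{k=0}^m c_k (b(x)-b_B)^{m-k} T_\alpha((b-b_B)^k f_2)(x_0)$ — note this constant still depends on $x$ through the $(b(x)-b_B)^{m-k}$ factor, so more carefully one writes, for $x\in B$,
\[
T_{\alpha,b}^m f(x) - (\text{avg over }B) = \sum_{k=0}^m c_k\Big[(b(x)-b_B)^{m-k}T_\alpha((b-b_B)^k f_2)(x) - \big((b(\cdot)-b_B)^{m-k}T_\alpha((b-b_B)^k f_2)(\cdot)\big)_B\Big],
\]
plus the analogous local term handled above. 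For each fixed $k$, I would write $(b(x)-b_B)^{m-k}T_\alpha((b-b_B)^kf_2)(x) - (b(z)-b_B)^{m-k}T_\alpha((b-b_B)^kf_2)(z)$ for $x,z\in B$, split it as
\[
\big((b(x)-b_B)^{m-k}-(b(z)-b_B)^{m-k}\big)T_\alpha((b-b_B)^kf_2)(x) + (b(z)-b_B)^{m-k}\big(T_\alpha((b-b_B)^kf_2)(x)-T_\alpha((b-b_B)^kf_2)(z)\big),
\]
and estimate: the first bracket is $\lesssim \|b\|_{\Lambda(\delta)}^{m-k} R^{(m-k-1)\delta}|x-z|^\delta \lesssim \|b\|_{\Lambda(\delta)}^{m-k}R^{(m-k)\delta}$ (using $|x-z|\le 2R$ and that $t\mapsto t^{m-k}$ is locally Lipschitz on bounded sets, or directly $|a^j - b^j|\lesssim (|a|+|b|)^{j-1}|a-b|$); for $T_\alpha((b-b_B)^kf_2)(x)$ on the support $(2B)^c$ I again use $S_\alpha$ together with $|b(y)-b_B|\lesssim \|b\|_{\Lambda(\delta)}(|y-x_0|/R)^\delta R^\delta$ growing like a power of the dyadic scale, which the extra factor $(2^j)^{m\delta}$ in the definition of $H_{\alpha,\Phi,m}(\delta)$ is designed to absorb. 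The key difference from $f_1$ is the term $T_\alpha((b-b_B)^kf_2)(x)-T_\alpha((b-b_B)^kf_2)(z) = \int_{(2B)^c}(K_\alpha(x-y)-K_\alpha(z-y))(b(y)-b_B)^kf(y)\,dy$: I would decompose $(2B)^c$ into dyadic annuli $|y-x_0|\sim 2^jR$, on each of which $|b(y)-b_B|^k \lesssim \|b\|_{\Lambda(\delta)}^k (2^j R)^{k\delta}$, apply the generalized H\"older inequality $\|fg\|_{1,A} \lesssim \|f\|_{\Phi,A}\|g\|_{\widetilde\Phi,A}$ with the kernel difference in $L^\Phi$ and $f$ in $L^{\widetilde\Phi}$, and use the hypothesis $\Phi^{-1}(t)\lesssim t^{(\beta-1)/r}$ to control the $\widetilde\Phi$-average of $f$ by an $L^r$-type average (since this forces $\widetilde\Phi(t)\gtrsim t^{r/(r-\beta+1)}$, comparable up to the needed range with an $L^{r_0}$ norm for an appropriate $r_0<r$, tame enough to run Hölder against $w^\beta\in A_{r/\beta,\infty}$). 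Summing the resulting geometric-type series in $j$ — where convergence is guaranteed precisely by $K_\alpha\in H_{\alpha,\Phi,m}(\delta)$, the factor $(2^jR)^{n-\alpha}$ matching the volume normalization and $(2^j)^{m\delta}$ absorbing the combined growth $(2^jR)^{(m-k)\delta+k\delta}$ of the $b$-oscillation factors — yields a bound by $C\|b\|_{\Lambda(\delta)}^m \|fw\|_{L^r}|B|^{\widetilde\delta/n}/\|w\chi_B\|_\infty$.

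The main obstacle I anticipate is bookkeeping the interplay of the three scaling exponents: the volume factor $|B|^{1+\widetilde\delta/n}$ in the definition of $\|\cdot\|_{\mathbb L_w(\widetilde\delta)}$, the gain $\alpha$ from the size/smoothness conditions, and the total $\delta$-power $m\delta$ coming from the $m$ oscillation factors $b-b_B$ — all of which must combine with the $A_{r/\beta,\infty}$ balance to reproduce exactly $\widetilde\delta = m\delta + \alpha - n/r$, with no room to spare; this is why the hypothesis requires $r < n/((m-1)\delta+\alpha)$ (so that $\widetilde\delta<\delta<1$, keeping $\mathbb L_w(\widetilde\delta)$ a genuine Lipschitz/BMO space) and why the constraint $\delta < (n-\alpha)/m$ is needed to keep $q$, hence the relevant auxiliary exponents, positive and in range. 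A secondary technical point is making rigorous the passage from the $\widetilde\Phi$-Luxemburg average of $f$ over the dyadic ball $B(0,2^{j+1}R)$ to an honest weighted $L^r$ bound; I would handle this by invoking the generalized Hölder inequality for Young functions stated in the preliminaries, choosing the third Young function $\Theta$ so that $\Theta^{-1}(t)\approx t^{1/r_0}$ for a suitable $r_0\in(1,r)$, and then applying the (standard) fact that $w^\beta\in A_{r/\beta,\infty}$ implies the requisite control of $\big(|B'|^{-1}\int_{B'}|f|^{r_0}\big)^{1/r_0}$ by $\|fw\|_{L^r}$ times the appropriate power of $|B'|$ divided by $\|w\chi_{B'}\|_\infty$ — an estimate that is implicit in, and extracted the same way as in, the proof of Theorem \ref{boundednessLipw}.
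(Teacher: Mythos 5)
Your plan is sound and arrives at the same estimates as the paper, but with a different organization of the global piece. The paper's proof of Theorem \ref{HormanderLipw} reuses verbatim the decomposition from Theorem \ref{boundednessLipw}: for $x,y\in B$ it directly estimates $|T^m_{\alpha,b}f_2(x)-T^m_{\alpha,b}f_2(y)|$ by adding and subtracting $(b(x)-b(z))^m K_\alpha(y-z)$ inside the integral, producing a kernel-difference term $I_3$ (with the full factor $|b(x)-b(z)|^m$, which grows like $(2^jR)^{m\delta}$ on the $j$-th annulus and is exactly what the $2^{jm\delta}$ weight in $H_{\alpha,\Phi,m}(\delta)$ is built to absorb) and a symbol-difference term $I_4$ controlled by $S_\alpha$ alone. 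You instead expand $(b(x)-b(y))^m$ binomially around $b_B$, which is the decomposition the paper reserves for Theorem \ref{limitcase} and Lemma \ref{lemalimite}, and then split each fixed-$k$ term into a $(b(\cdot)-b_B)^{m-k}$-difference part and a $T_\alpha$-difference part. Both routes produce the same dyadic sum, three-function H\"older step $\Phi^{-1}(t)\,t^{1/r}\,t^{1/(r/\beta)'}\lesssim t$, and $A_{r/\beta,\infty}$ balance, so your version is a valid alternative; the paper's choice is slightly more economical because it keeps all of the $b$-oscillation in a single term rather than spread over $k=0,\dots,m$. One technical point to fix in your local part: under the integral condition $S_\alpha$ alone you do \emph{not} have the pointwise bound $|T_\alpha g(x)|\lesssim M_\alpha g(x)$ (that requires a pointwise size estimate on $K_\alpha$, not just an annular integral bound); the correct and standard remedy, which the paper uses, is to integrate over $B$ first, apply Tonelli, and invoke Lemma \ref{Salfabola} to get $\int_{2B}|K_\alpha(x-y)|\,dx\lesssim R^\alpha$, after which H\"older and $w\in A_{r,\infty}$ (which follows from $w^\beta\in A_{r/\beta,\infty}$) finish $I_1$. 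With that correction, your argument goes through.
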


    \begin{thm}\label{limitcaseH}Let $m\in \mathbb N$, $0< \delta<\min\{1,(n-\alpha)/m\}$ and $r= n/((m-1)\delta+\alpha)$. Let $w$ be a weight such that $w^\beta\in A_{r/\beta,\infty}$ for some $1<\beta<r$. Let $T_\alpha$ be a fractional integral operator with kernel $K_\alpha\in S_{\alpha} \cap H_{\alpha,\Phi,m}(\delta)$ where $\Phi$ is a Young function verifying $\Phi^{-1}(t)\lesssim t^{\frac{\beta-1}{r}}$ for every $t>0$, and $\widetilde{\Phi}\in \mathcal B_{m\delta+\alpha,\beta}$. If $b\in \Lambda(\delta)$,
        the following statements are equivalent, \vspace{-0,3cm}
        \begin{enumerate}[label=(\roman*), leftmargin=0cm,itemindent=.6cm,labelwidth=\itemindent,labelsep=0cm,align=left]
            \item \label{ecu1H} $T_{\alpha,b}^{m}:L^{r}_w(\mathbb R^n)\hookrightarrow \mathbb L_w(\delta)$;
            \item There exists a positive constant $C$ such that
            \begin{equation}\label{ecu2H}
            \frac{\|w\chi_B\|_\infty}{|B|^{1+\frac{\delta}{n}}}
            \int_B\abs{\sum_{k=0}^{m}c_{k}\left[S(x,u,k)-(S(\cdot,u,k))_{B}\right]}dx\leq C \norm{fw}_{r},
            \end{equation}
            for every ball $B\subset \mathbb{R}^{n}$, $x, u\in B$ and $f\in L^{1}_{{\rm loc}}(\mathbb
            R^n)$.
        \end{enumerate}
    \end{thm}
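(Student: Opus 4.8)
The plan is to follow the proof scheme of Theorem~\ref{limitcase}, replacing the pointwise Lipschitz regularity of $K_{\alpha}$ by its Orlicz--H\"ormander substitute $H_{\alpha,\Phi,m}(\delta)$. Fix a ball $B=B(x_{B},r_{B})$; we may assume $\norm{fw}_{r}<\infty$, so that $f\in L^{1}_{\mathrm{loc}}$ and, together with the hypotheses $w^{\beta}\in A_{r/\beta,\infty}$, $\widetilde\Phi\in\mathcal B_{m\delta+\alpha,\beta}$, $\Phi^{-1}(t)\lesssim t^{(\beta-1)/r}$ (through the weighted continuity of the fractional Orlicz maximal operators of Section~\ref{Aux}), all the quantities below and those in \eqref{ecu2H} are finite. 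Split $f=f\chi_{2B}+f\chi_{(2B)^{c}}$, expand the commutator kernel $(b(x)-b(y))^{m}$ around $b_{B}$ by the binomial formula, and, in the contribution of $f_{2}:=f\chi_{\mathbb R^{n}\setminus B}$, freeze the inner operator $T_{\alpha}$ at an arbitrary point $u\in B$. This writes $T_{\alpha,b}^{m}f$ on $B$ as the sum of: a \emph{local} piece $T_{\alpha,b}^{m}(f\chi_{2B})$; the \emph{main global} piece $\sum_{k=0}^{m}c_{k}(b(x)-b_{B})^{m-k}T_{\alpha}((b-b_{B})^{k}f_{2})(u)$, which is precisely the combination of \eqref{ecu2H}; and a \emph{H\"ormander error} $\sum_{k=0}^{m}c_{k}(b(x)-b_{B})^{m-k}\bigl[T_{\alpha}((b-b_{B})^{k}f_{2})(x)-T_{\alpha}((b-b_{B})^{k}f_{2})(u)\bigr]$ (the part of $f_{2}$ living in $2B\setminus B$ being of local type). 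Throughout we use $|b(x)-b_{B}|\lesssim\norm{b}_{\Lambda(\delta)}r_{B}^{\delta}$ for $x\in B$ and $|b(y)-b_{B}|\lesssim\norm{b}_{\Lambda(\delta)}(2^{j}r_{B})^{\delta}$ for $|y-x_{B}|\sim 2^{j}r_{B}$.

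\emph{The local piece.} Since $\int_{B}|h-h_{B}|\le 2\int_{B}|h|$, Fubini and the size condition $S_{\alpha}$ (which gives $\int_{B}|K_{\alpha}(x-y)|\,dx\lesssim r_{B}^{\alpha}$ for every $y\in 2B$) yield $\int_{B}|T_{\alpha,b}^{m}(f\chi_{2B})|\,dx\lesssim\norm{b}_{\Lambda(\delta)}^{m}r_{B}^{m\delta+\alpha}\int_{2B}|f|$. Then H\"older's inequality $\int_{2B}|f|\le\norm{fw}_{r}\norm{w^{-1}\chi_{2B}}_{r'}$, the identity $r_{B}^{m\delta+\alpha}/|B|^{1+\delta/n}\approx r_{B}^{-n/r'}$ (valid since $r=n/((m-1)\delta+\alpha)$), and the observation that $w^{\beta}\in A_{r/\beta,\infty}$ means $w^{-\beta r/(r-\beta)}\in A_{1}$, whence $w^{-r'}\in A_{1}$ (because $r'\le\beta r/(r-\beta)$ as $\beta>1$, and $A_{1}$ is stable under powers in $(0,1]$), so $\norm{w\chi_{2B}}_{\infty}\bigl(|2B|^{-1}\int_{2B}w^{-r'}\bigr)^{1/r'}\lesssim 1$, combine to give
\[
\frac{\norm{w\chi_{B}}_{\infty}}{|B|^{1+\delta/n}}\int_{B}\bigl|T_{\alpha,b}^{m}(f\chi_{2B})\bigr|\,dx\ \lesssim\ \norm{b}_{\Lambda(\delta)}^{m}\norm{fw}_{r},
\]
uniformly in $B$. (Alternatively, in the direction $(i)\Rightarrow(ii)$ one may bound this oscillation by $\norm{T_{\alpha,b}^{m}(f\chi_{2B})}_{\mathbb{L}_{w}(\delta)}\lesssim\norm{fw\chi_{2B}}_{r}\le\norm{fw}_{r}$ using $(i)$.)

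\emph{The H\"ormander error.} Fix $x,u\in B$, put $v=x-u$ (so $|v|\le 2r_{B}$), decompose $(2B)^{c}=\bigcup_{j\ge 1}A_{j}$ with $A_{j}=\{2^{j}r_{B}\le|y-x_{B}|<2^{j+1}r_{B}\}$, and set $\widetilde B_{j}=B(x_{B},2^{j+1}r_{B})$; note $|u-y|\sim 2^{j}r_{B}$ on $A_{j}$. By the generalized H\"older inequality for Luxemburg averages,
\[
\int_{A_{j}}\bigl|K_{\alpha}(x-y)-K_{\alpha}(u-y)\bigr|\,|b(y)-b_{B}|^{k}|f(y)|\,dy\ \lesssim\ \norm{b}_{\Lambda(\delta)}^{k}(2^{j}r_{B})^{k\delta}\,|\widetilde B_{j}|\,N_{j}^{v}\,\norm{f\chi_{\widetilde B_{j}}}_{\widetilde\Phi,\widetilde B_{j}},
\]
where $N_{j}^{v}=\|(K_{\alpha}(\cdot+v)-K_{\alpha}(\cdot))\chi_{|\cdot|\sim 2^{j}r_{B}}\|_{\Phi,B(0,2^{j+2}r_{B})}$ is, up to a harmless relabeling of the dyadic index, the quantity weighted in $H_{\alpha,\Phi,m}(\delta)$. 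From $\Phi^{-1}(t)\lesssim t^{(\beta-1)/r}$ one gets $\widetilde\Phi(t)\lesssim t^{p_{0}}$ with $p_{0}=r/(r-\beta+1)\in(1,r)$, so $\norm{f\chi_{\widetilde B_{j}}}_{\widetilde\Phi,\widetilde B_{j}}\lesssim\bigl(|\widetilde B_{j}|^{-1}\int_{\widetilde B_{j}}|f|^{p_{0}}\bigr)^{1/p_{0}}$, and H\"older with exponent $r/p_{0}$ together with the $A_{1}$ property of $w^{-r/(r-\beta)}$ (again from $w^{\beta}\in A_{r/\beta,\infty}$) gives $\norm{f\chi_{\widetilde B_{j}}}_{\widetilde\Phi,\widetilde B_{j}}\lesssim|\widetilde B_{j}|^{-1/r}\norm{fw}_{r}\norm{w\chi_{B}}_{\infty}^{-1}$. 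Plugging this in, using $|\widetilde B_{j}|\approx(2^{j}r_{B})^{n}$ and $n/r'=n-\alpha-(m-1)\delta$, the resulting $j$-series is $\lesssim\norm{b}_{\Lambda(\delta)}^{k}\norm{fw}_{r}\norm{w\chi_{B}}_{\infty}^{-1}r_{B}^{(k-m+1)\delta}\sum_{j\ge 1}(2^{j})^{(k-m+1)\delta}(2^{j}r_{B})^{n-\alpha}N_{j}^{v}$; since $0\le k\le m$ we have $(2^{j})^{(k-m+1)\delta}\le(2^{j})^{m\delta}$, so this series is dominated by the defining series of $H_{\alpha,\Phi,m}(\delta)$ and is $\lesssim 1$. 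Multiplying by $c_{k}|b(x)-b_{B}|^{m-k}\lesssim\norm{b}_{\Lambda(\delta)}^{m-k}r_{B}^{(m-k)\delta}$ and summing over $k$, the powers of $r_{B}$ collapse to $r_{B}^{\delta}$, so the whole H\"ormander error at $x\in B$ is $\lesssim\norm{b}_{\Lambda(\delta)}^{m}\norm{fw}_{r}\norm{w\chi_{B}}_{\infty}^{-1}r_{B}^{\delta}$, uniformly in $x,u\in B$; integrating over $x\in B$ and multiplying by $\norm{w\chi_{B}}_{\infty}/|B|^{1+\delta/n}$ gives a bound $\lesssim\norm{b}_{\Lambda(\delta)}^{m}\norm{fw}_{r}$.

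\emph{Conclusion.} Write $\mathcal O_{B}(h)=\tfrac{\norm{w\chi_{B}}_{\infty}}{|B|^{1+\delta/n}}\int_{B}|h(x)-h_{B}|\,dx$ and let $\mathcal M_{B,u}(f)$ be the left-hand side of \eqref{ecu2H}. Since $T_{\alpha,b}^{m}f=T_{\alpha,b}^{m}(f\chi_{2B})+T_{\alpha,b}^{m}f_{2}$ and $\mathcal O_{B}(T_{\alpha,b}^{m}f_{2})$ differs from $\mathcal M_{B,u}(f)$ only by the H\"ormander error, the two previous steps give, for every ball $B$ and every $u\in B$,
\[
\bigl|\,\mathcal O_{B}(T_{\alpha,b}^{m}f)-\mathcal M_{B,u}(f)\,\bigr|\ \lesssim\ \mathcal O_{B}\bigl(T_{\alpha,b}^{m}(f\chi_{2B})\bigr)+\norm{b}_{\Lambda(\delta)}^{m}\norm{fw}_{r}\ \lesssim\ \norm{b}_{\Lambda(\delta)}^{m}\norm{fw}_{r}.
\]
Therefore, if $(ii)$ holds then $\mathcal M_{B,u}(f)\le C\norm{fw}_{r}$ and hence $\mathcal O_{B}(T_{\alpha,b}^{m}f)\lesssim\norm{b}_{\Lambda(\delta)}^{m}\norm{fw}_{r}$ for every $B$, i.e.\ $(i)$; conversely, if $(i)$ holds then $\mathcal O_{B}(T_{\alpha,b}^{m}f)\le\norm{T_{\alpha,b}^{m}f}_{\mathbb{L}_{w}(\delta)}\lesssim\norm{fw}_{r}$, so $\mathcal M_{B,u}(f)\lesssim\norm{b}_{\Lambda(\delta)}^{m}\norm{fw}_{r}$ for every $B$ and every $u\in B$, i.e.\ $(ii)$. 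The main obstacle is the H\"ormander-error step: one must balance, inside the dyadic sum, the growth $(2^{j}r_{B})^{k\delta}$ coming from $|b(y)-b_{B}|^{k}$, the volume normalizations, and the local weight average against $\norm{fw}_{r}$, so that the surviving power of $2^{j}$ does not exceed the $(2^{j})^{m\delta}$ furnished by $H_{\alpha,\Phi,m}(\delta)$ --- which is exactly why that class carries the factor $(2^{j})^{m\delta}$ and why $k\le m$ (equivalently $\delta<(n-\alpha)/m$) is required; a secondary point is to keep all estimates uniform in the reference point $u\in B$, as needed to recover \eqref{ecu2H} for every $u\in B$.
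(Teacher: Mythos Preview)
Your argument is correct and follows the same decomposition scheme as the paper, which simply refers back to the proof of Theorem~\ref{limitcase} with Lemma~\ref{lemalimite} in place of Lemma~\ref{lema m>1} and Theorem~\ref{teo1H} in place of Theorem~\ref{teo1}. The one genuine difference is your treatment of the local piece $T_{\alpha,b}^{m}(f\chi_{2B})$: the paper controls its oscillation via the weighted $L^{p}$--$L^{q}$ boundedness of Theorem~\ref{teo1H} (which is precisely where the hypothesis $\widetilde\Phi\in\mathcal B_{m\delta+\alpha,\beta}$ is used), whereas you bound it directly from $S_{\alpha}$, Fubini, and $w\in A_{r,\infty}$, exactly as in the $I_{1}$-estimate of Theorem~\ref{HormanderLipw}. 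Your route is more elementary at this step and makes the extra hypothesis on $\widetilde\Phi$ look dispensable (you invoke it only for a priori finiteness); the paper's route instead reuses the heavier $L^{p}$--$L^{q}$ machinery already developed. Your H\"ormander-error computation is essentially a reproof of Lemma~\ref{lemalimite}, and your final oscillation comparison $|\mathcal O_B(T_{\alpha,b}^m f)-\mathcal M_{B,u}(f)|\lesssim\norm{b}_{\Lambda(\delta)}^m\norm{fw}_r$ is the paper's $\sigma_1,\sigma_2,\sigma_3$ identity repackaged. One small caution: the annular mismatch you flag (the paper defines $S(x,u,k)$ with $f_2=f\chi_{\mathbb R^n\setminus B}$ but its proof uses $f\chi_{(2B)^c}$) is not completely trivial to absorb uniformly in $u\in B$, since $|K_\alpha(u-y)|$ need not be bounded for $y\in 2B\setminus B$; this discrepancy is present already in the paper, and the clean fix is simply to read $f_2=f\chi_{(2B)^c}$ throughout, as the proof does.
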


\section{Auxiliary results}\label{Aux}

In this section we give some previous results. We begin with some
inequalities involving functions in $\Lambda(\delta)$.

\begin{lem}\label{puntualLip}Let $0<\delta < 1$ and $B\subset \mathbb{R}^{n} $ a ball. If $b\in \Lambda(\delta)$,
then
\begin{enumerate}[label=(\roman*), leftmargin=0cm,itemindent=.6cm,labelwidth=\itemindent,labelsep=0cm,align=left]
    \item \label{puntoprom}for every $y\in \lambda B$, $\lambda\geq 1$,
        \[|b(y)-b_B|\leq C\|b\|_{\Lambda(\delta)}|\lambda B|^{\frac{\delta}{n}}.\]
    \item \label{promprom}for every $j\in \mathbb N$
    \begin{equation*}
    |b_{2^{j+1} B}-b_{2B}|\leq 2^n j|2^{j+1} B|^{\frac{\delta}{n}}\|b\|_{\Lambda(\delta)}.
    \end{equation*}
\end{enumerate}
\end{lem}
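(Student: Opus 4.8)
The plan is to prove both items directly from the Lipschitz bound $|b(x)-b(y)|\le \|b\|_{\Lambda(\delta)}|x-y|^\delta$, using only that the diameter of a ball $\lambda B$ is comparable to $|\lambda B|^{1/n}$. For item \ref{puntoprom}, I would write $b(y)-b_B=\frac{1}{|B|}\int_B (b(y)-b(z))\,dz$, so that
\[
|b(y)-b_B|\le \frac{1}{|B|}\int_B |b(y)-b(z)|\,dz \le \|b\|_{\Lambda(\delta)}\,\frac{1}{|B|}\int_B |y-z|^\delta\,dz.
\]
For $y\in\lambda B$ and $z\in B$ one has $|y-z|\le C_n\,\mathrm{diam}(\lambda B)\le C_n |\lambda B|^{1/n}$, hence $|y-z|^\delta\le C_n^\delta |\lambda B|^{\delta/n}$, and the integral average is bounded by the same quantity, giving the claim with $C=C_n^\delta$.

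For item \ref{promprom}, I would telescope across the dyadic scales between $2B$ and $2^{j+1}B$: writing
\[
b_{2^{j+1}B}-b_{2B}=\sum_{i=1}^{j}\bigl(b_{2^{i+1}B}-b_{2^iB}\bigr),
\]
it suffices to bound each consecutive difference. Since $2^iB\subset 2^{i+1}B$, for fixed $i$ we have
\[
|b_{2^{i+1}B}-b_{2^iB}|=\Bigl|\frac{1}{|2^iB|}\int_{2^iB}\bigl(b(y)-b_{2^{i+1}B}\bigr)dy\Bigr|\le \frac{1}{|2^iB|}\int_{2^iB}|b(y)-b_{2^{i+1}B}|\,dy.
\]
Applying item \ref{puntoprom} with the ball $2^{i+1}B$ in place of $B$ and $\lambda=1$ (note $y\in 2^iB\subset 2^{i+1}B$) bounds the integrand by $C\|b\|_{\Lambda(\delta)}|2^{i+1}B|^{\delta/n}\le C\|b\|_{\Lambda(\delta)}|2^{j+1}B|^{\delta/n}$, since $|2^{i+1}B|\le|2^{j+1}B|$ and $\delta>0$. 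Summing over $i=1,\dots,j$ produces the factor $j$, and tracking the dimensional constant from item \ref{puntoprom} yields the stated bound $2^n j\,|2^{j+1}B|^{\delta/n}\|b\|_{\Lambda(\delta)}$.

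There is no real obstacle here; this is a routine Lipschitz-oscillation estimate. The only mild point of care is bookkeeping the geometric constants so that the final constant in \ref{promprom} comes out as the clean $2^n j$ rather than an unspecified $C$; this amounts to using $\mathrm{diam}(2^{i+1}B)\le 2\cdot 2^{i+1}r_B$ and the precise relation between the radius and measure of a ball in $\mathbb{R}^n$, and noting that each term in the telescoping sum is controlled by the coarsest scale $2^{j+1}B$. I would present item \ref{puntoprom} first and then invoke it inside the proof of item \ref{promprom} as indicated.
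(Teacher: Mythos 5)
The paper states Lemma \ref{puntualLip} without proof, so there is no in-text argument to compare against; your proposal supplies the standard argument. Both steps are correct: item \ref{puntoprom} is the usual average-of-Lipschitz-differences estimate, and the telescoping in item \ref{promprom} is the natural route, with each consecutive difference bounded via item \ref{puntoprom} applied to the larger ball. The only place you wave your hands is the assertion that ``tracking the dimensional constant\ldots{} yields the stated bound $2^n j$.'' To make that precise, note that for $y\in 2^iB\subset 2^{i+1}B$ and $z\in 2^{i+1}B$ one has $|y-z|\le 2\cdot 2^{i+1}r_B$, while $|2^{i+1}B|^{1/n}=\omega_n^{1/n}\,2^{i+1}r_B$ with $\omega_n$ the volume of the unit ball, so the per-term constant is $(2/\omega_n^{1/n})^{\delta}$, which is easily checked to be $\le 2^n$ for every $n\ge 1$ and $0<\delta<1$; summing the $j$ terms and majorizing each $|2^{i+1}B|^{\delta/n}$ by $|2^{j+1}B|^{\delta/n}$ then gives exactly $2^n j\,|2^{j+1}B|^{\delta/n}\|b\|_{\Lambda(\delta)}$.
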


The following lemma is an easy consequence of condition
$S_{\alpha}$.

\begin{lem}\label{Salfabola}Let $K_\alpha$ be a kernel verifying condition $S_{\alpha}$ with $0<\alpha<n$. Then, for any ball $B=B(x_B,r_B)$, we have
    \[\int_{B} |K_\alpha(x-x_B)| dx\lesssim r_B^{\alpha}.\]
\end{lem}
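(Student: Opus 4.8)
The plan is to reduce the estimate to the hypothesis $S_\alpha$ by a translation followed by a dyadic decomposition of the ball into annuli centered at the origin. First I would make the change of variables $z=x-x_B$, which turns
\[\int_B |K_\alpha(x-x_B)|\,dx=\int_{|z|\le r_B}|K_\alpha(z)|\,dz;\]
the center $x_B$ plays no role and only the radius $r_B$ survives.

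Next I would decompose the punctured ball (the origin being a null set) as the countable disjoint union of the dyadic annuli
\[\{z\in\mathbb R^n: |z|\le r_B\}\setminus\{0\}=\bigcup_{j=1}^{\infty} A_j,\qquad A_j=\{z: 2^{-j}r_B<|z|\le 2^{-j+1}r_B\}.\]
Since the integrand is nonnegative, Tonelli's theorem justifies interchanging the integral with the sum, and applying condition $S_\alpha$ to each annulus with $s=s_j:=2^{-j}r_B$ gives
\[\int_{A_j}|K_\alpha(z)|\,dz\le C\,s_j^{\alpha}=C\,2^{-j\alpha}\,r_B^{\alpha}.\]

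Finally I would sum the geometric series over $j\ge1$, which converges precisely because $\alpha>0$, to obtain
\[\int_{|z|\le r_B}|K_\alpha(z)|\,dz\le C\,r_B^{\alpha}\sum_{j=1}^{\infty}2^{-j\alpha}=\frac{C\,2^{-\alpha}}{1-2^{-\alpha}}\,r_B^{\alpha},\]
with the implicit constant depending only on $n$, $\alpha$ and the constant in $S_\alpha$. There is no genuine obstacle here: the only points requiring a word of care are the convergence of the series (which is where $\alpha>0$ is used) and the legitimacy of summing over the annuli, both of which are immediate.
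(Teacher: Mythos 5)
Your proof is correct and is essentially identical to the paper's: the same translation $z=x-x_B$, the same dyadic decomposition into annuli $2^{-j}r_B<|z|\leq 2^{-j+1}r_B$, the same application of $S_\alpha$ with $s=2^{-j}r_B$, and the same geometric series convergent because $\alpha>0$.
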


\begin{proof}By changing variables first, we then split the integral into dyadic sets and use $S_\alpha$ condition in each set as it follows
    \begin{align*}
    \int_{B} |K_\alpha(x-x_B)| dx&=\int_{B(0,r_B)} |K_\alpha(z)| dz=\sum\limits_{j=1}^\infty \int_{2^{-j}r_B\leq |z|< 2(2^{-j} r_B)}|K_\alpha(z)| dz\\
    &\lesssim \sum\limits_{j=1}^\infty(2^{-j} r_B)^{\alpha}=r_B^{\alpha} \sum\limits_{j=1}^\infty 2^{-j\alpha}\thickapprox r_B^\alpha,
    \end{align*}
since $\alpha>0$.
\end{proof}

In order to obtain the boundedness result between Lebesgue spaces,
we prove the following key estimate, which shows how can we control
the higher order commutators of $T_\alpha$ by a fractional maximal
function via the sharp maximal operator $M^\sharp_{0,\gamma}$,
$0<\gamma<1$, given by
\[M^\sharp_0 f(x)=\sup\limits_{B\ni x}\inf\limits_{a\in\mathbb{R}} \frac{1}{|B|}\int_{B} |f(y)-a|\, dy,\]
where $M^\sharp_{0,\gamma} f:=M^\sharp_0(|f|^\gamma)^{1/\gamma}$.

\begin{lem}\label{pointwise}Let $m\in \mathbb{N}$, $0<\gamma<1/m$, $0<\alpha<n$ and $0< \delta
<\min\{1,(n-\alpha)/m\}$. Let $b\in \Lambda(\delta)$ and $T_\alpha$ a
fractional integral operator with kernel $K_\alpha\in S_\alpha$.
Then, there exists a positive constant $C$ such that
\begin{enumerate}[label=(\roman*), leftmargin=0cm,itemindent=.6cm,labelwidth=\itemindent,labelsep=0cm,align=left]
\item\label{nucleosPR} if $K_\alpha \in H_{\alpha,\infty}^*$,
\[M^\sharp_{0,\gamma}(T_{\alpha,b}^m f)(x)\lesssim \|b\|_{\Lambda(\delta)}^m
\left(\sum\limits_{j=0}^{m-1} M_{\theta_j,\gamma} (|T_{\alpha, b}^j f|)(x)+M_{\theta_0+\alpha}f(x)\right),\]
where $\theta_j=\delta(m-j)$, $j=0, \dots, m$.
\item\label{Hormanders} if $K_\alpha\in H_{\alpha,\Phi}$ for some Young function $\Phi$,
\[M^\sharp_{0,\gamma}(T_{\alpha,b}^m f)(x)\lesssim \|b\|_{\Lambda(\delta)}^m\left(\sum\limits_{j=0}^{m-1}
 M_{\theta_j,\gamma} (|T_{\alpha, b}^j f|)(x)+M_{\theta_0+\alpha,\widetilde{\Phi}}f(x)\right),\]
where $\theta_j=\delta(m-j)$, $j=0, \dots, m$, and
$\widetilde{\Phi}$ is the complementary function of $\Phi$.
\end{enumerate}
\end{lem}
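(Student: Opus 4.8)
The plan is to fix a ball $B = B(x_B, r_B)$ containing the point $x$, split $f = f_1 + f_2$ with $f_1 = f\chi_{2B}$ and $f_2 = f\chi_{(2B)^c}$, and estimate the oscillation of $T_{\alpha,b}^m f$ over $B$ against a suitable constant $a$ depending on $B$. The first move is the standard binomial expansion: writing $b(x) - b(y) = (b(x) - b_B) + (b_B - b(y))$, one obtains
\[
T_{\alpha,b}^m f(z) = \sum_{k=0}^m c_k (b(z)-b_B)^{m-k}\, T_\alpha\big((b_B - b(\cdot))^k f\big)(z),
\]
so that the $k=m$ term is $T_\alpha((b_B-b)^m f)(z)$ and the terms with $k<m$ each carry a factor $(b(z)-b_B)^{m-k}$. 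For $z \in B$ Lemma \ref{puntualLip}\ref{puntoprom} gives $|b(z)-b_B|^{m-k} \lesssim \|b\|_{\Lambda(\delta)}^{m-k} r_B^{\delta(m-k)} = \|b\|_{\Lambda(\delta)}^{m-k}|B|^{\theta_k/n}$, and then the factor $|B|^{\theta_k/n}$ combined with $|T_\alpha((b_B-b)^k f)|$ is designed to be absorbed into $M_{\theta_k,\gamma}(|T_\alpha((b_B-b)^k f)|)$ — but since $T_\alpha((b_B-b)^k f)$ is not literally $T_{\alpha,b}^k f$, one has to re-expand $(b_B - b)^k$ once more by the binomial theorem to express it as a combination of $T_{\alpha,b}^j f$, $0 \le j \le k$, again times powers of $(b(\cdot)-b_B)$ controlled on $B$. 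This accounts for all the $\sum_{j=0}^{m-1} M_{\theta_j,\gamma}(|T_{\alpha,b}^j f|)(x)$ terms. The only genuinely new term is the top one, $T_\alpha((b_B-b)^m f)(z)$, which I split as $T_\alpha((b_B-b)^m f_1)(z) + T_\alpha((b_B-b)^m f_2)(z)$.

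For the local part $T_\alpha((b_B-b)^m f_1)$ I would choose $a=0$ and use a Kolmogorov-type inequality (valid since $0<\gamma<1/m<1$) together with the weak-type estimate for $T_\alpha$ implied by the size condition $S_\alpha$ — precisely, $T_\alpha$ maps $L^1 \to L^{n/(n-\alpha),\infty}$, a consequence of Lemma \ref{Salfabola} and $S_\alpha$ — to bound the $\gamma$-average of $|T_\alpha((b_B-b)^m f_1)|$ over $B$ by a constant times $|B|^{-1+\alpha/n}\int_{2B}|b_B - b(y)|^m |f(y)|\,dy$. On $2B$, Lemma \ref{puntualLip}\ref{puntoprom} again gives $|b_B - b(y)|^m \lesssim \|b\|_{\Lambda(\delta)}^m |B|^{m\delta/n} = \|b\|_{\Lambda(\delta)}^m|B|^{\theta_0/n}$, so this expression is $\lesssim \|b\|_{\Lambda(\delta)}^m |B|^{(\theta_0+\alpha)/n - 1}\int_{2B}|f| \lesssim \|b\|_{\Lambda(\delta)}^m M_{\theta_0+\alpha}f(x)$ in case \ref{nucleosPR}; in case \ref{Hormanders} I replace the plain average by a generalized Hölder inequality with the pair $(\Phi,\widetilde\Phi)$ to produce $\|f\|_{\widetilde\Phi,2B}$ and hence $M_{\theta_0+\alpha,\widetilde\Phi}f(x)$, using that the factor $|b_B-b|^m$ lives in every Orlicz space on $2B$ with the stated bound.

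For the far part I take $a = T_\alpha((b_B - b)^m f_2)(x_B)$ (or any fixed point of $B$) and estimate, for $z \in B$,
\[
\big|T_\alpha((b_B-b)^m f_2)(z) - T_\alpha((b_B-b)^m f_2)(x_B)\big| \le \int_{(2B)^c} |K_\alpha(z-y) - K_\alpha(x_B-y)|\,|b_B - b(y)|^m |f(y)|\,dy.
\]
Decomposing $(2B)^c$ into annuli $|x_B - y| \sim 2^j r_B$, $j\ge 1$, on which $|b_B - b(y)|^m \lesssim \|b\|_{\Lambda(\delta)}^m (2^j r_B)^{m\delta}$ by the Lipschitz condition, the regularity of the kernel produces the convergent sum: in case \ref{nucleosPR}, $H_{\alpha,\infty}^*$ gives $|K_\alpha(z-y)-K_\alpha(x_B-y)| \lesssim r_B^\eta/(2^j r_B)^{n-\alpha+\eta}$, and since $\delta<\eta$ the geometric series in $2^{j(m\delta - \eta)}$ converges, leaving $\lesssim \|b\|_{\Lambda(\delta)}^m r_B^{m\delta+\alpha}\, \fint_{\sim}|f| \lesssim \|b\|_{\Lambda(\delta)}^m M_{\theta_0+\alpha}f(x)$; in case \ref{Hormanders}, I apply the generalized Hölder inequality on each annulus and then invoke the defining inequality of $H_{\alpha,\Phi}$ (the case $m=0$ of $H_{\alpha,\Phi,m}(\delta)$, noting the extra $2^{j m\delta}$ coming from $|b_B - b|^m$ is exactly the weight appearing in $H_{\alpha,\Phi,m}(\delta)$; here, though, we only have $K_\alpha\in H_{\alpha,\Phi}$, so the $2^{jm\delta}$ must instead be absorbed — this is why $\delta$ is not needed positive here and one argues as in the smooth case using that $\|(b_B-b)\chi_{\sim 2^jr_B}\|_{\infty}\lesssim (2^jr_B)^\delta$ pulls out of the Luxemburg norm) to get the sum bounded by a constant times $r_B^{\theta_0+\alpha}\|f\|_{\widetilde\Phi, \cdot} \lesssim M_{\theta_0+\alpha,\widetilde\Phi}f(x)$. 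Taking the supremum over $B\ni x$ and combining the three contributions (the lower-order commutator terms, the local top term, the far top term) yields the claimed pointwise bounds. The main obstacle is bookkeeping the iterated binomial re-expansions so that every intermediate operator $T_\alpha((b_B-b)^k f)$ is correctly rewritten in terms of the genuine commutators $T_{\alpha,b}^j f$ with the right powers $\theta_j$, and, in case \ref{Hormanders}, checking that the generalized Hölder inequality can be applied on each annulus with a triple of Young functions compatible with $\Theta(t)=t$ (i.e. $\Phi^{-1}\widetilde\Phi^{-1}\gtrsim \mathrm{id}$), so that the $H_{\alpha,\Phi}$ sum closes.
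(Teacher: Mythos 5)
Your overall strategy coincides with the paper's: expand the commutator into lower-order commutators plus a ``top'' term $T_\alpha((b_B-b)^m f)$, split $f$ into a local and a far part, use the size condition $S_\alpha$ on the local piece and the regularity condition on the far piece. The double binomial re-expansion you use to convert $T_\alpha((b_B-b)^k f)$ into genuine commutators $T_{\alpha,b}^j f$ is correct and works out to the same thing as the single identity the paper invokes (essentially $T_{\alpha,b}^m f(x)=\sum_{j=0}^{m-1}C_{j,m}(b(x)-b_{2B})^{m-j}T_{\alpha,b}^jf(x)+T_\alpha((b-b_{2B})^m f)(x)$); your route is just more laborious.

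However, there is a genuine gap in your treatment of the local part. You invoke Kolmogorov's inequality together with the claim that $T_\alpha\colon L^1\to L^{n/(n-\alpha),\infty}$ ``as a consequence of Lemma~\ref{Salfabola} and $S_\alpha$''. This weak-type estimate does \emph{not} follow from $S_\alpha$ alone: $S_\alpha$ is only an averaged size bound on dyadic shells, and one can construct kernels satisfying $S_\alpha$ that concentrate most of their $L^1$ mass on a thin shell $\{|x|\in(2^{-j},2^{-j}(1+\varepsilon))\}$, for which $|\{|T_\alpha f|>\lambda\}|\gg\lambda^{-n/(n-\alpha)}$ as $\varepsilon\to 0$. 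The paper instead passes from the $L^\gamma$-average to the $L^1$-average via Jensen (using $\gamma<1$), then applies Tonelli and Lemma~\ref{Salfabola} to obtain the local estimate $\int_B|T_\alpha(g f\chi_{2B})|\lesssim r_B^\alpha\int_{2B}|g||f|$, which is all that is needed and which does follow from $S_\alpha$. You should replace the Kolmogorov/weak-type step by this direct argument.

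There is also a smaller bookkeeping slip in the far part for case (i): you write that ``the geometric series in $2^{j(m\delta-\eta)}$ converges since $\delta<\eta$'', but $\delta<\eta$ does not give $m\delta<\eta$ for $m\ge 2$. If you absorb the factor $(2^jr_B)^{m\delta+\alpha}$ into the fractional maximal function $M_{\theta_0+\alpha}f(x)$ on each annulus (as the paper does), what remains is $\sum_j 2^{-j\eta}$, which converges because $\eta>0$; no comparison between $\delta$ and $\eta$ is needed here. The same bookkeeping, done carefully, also clears up your somewhat muddled discussion of where the $2^{jm\delta}$ factor goes in case (ii): it is absorbed into $M_{\theta_0+\alpha,\widetilde\Phi}f(x)$ through the fractional exponent $\theta_0+\alpha=m\delta+\alpha$, and the remaining sum $\sum_j(2^jR)^{n-\alpha}\|(K_\alpha(\cdot-(y-x_B))-K_\alpha(\cdot))\chi_{|\cdot|\sim 2^jR}\|_{\Phi,B_{j+1}}$ is exactly the quantity bounded by the $H_{\alpha,\Phi}$ hypothesis.
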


\begin{rem}For $0< \delta < 1$, $m=1$ and $K_\alpha\in H_{\alpha,\infty}^*$ and homogeneous of degree $\alpha-n$, the proof of
\ref{nucleosPR} can be found in \cite{PR2} for a larger class of
Lipschitz spaces with variable parameter.
    \end{rem}

    \begin{proof}[Proof of Lemma \ref{pointwise}:] Fix $B$ a ball containing $x$, and decompose the commutator in the following way (see, for instance, \cite{GCHST} or \cite{P})
        \begin{equation}\label{reductionorder}T_{\alpha,b}^m f(x)=\sum\limits_{j=0}^{m-1}C_{j,m}(b(x)-b_{2B})^{m-j}T_{\alpha,b}^jf(x)+T_\alpha((b-b_{2B})^m f)(x).
        \end{equation}

        If we split $f=f_1+f_2$ where $f_2=f\chi_{2B}$, it is sufficient to estimate, for $0<\gamma <1/m$, the average
        \begin{equation}\label{promsharp}
        \left(\frac{1}{|B|}\int_B |T_{\alpha,b}^m f(y)-T_\alpha((b-b_{2B})^m f_2)(x_B)|^\gamma dy\right)^{1/\gamma}\leq I+II+III,
        \end{equation}
        where $x_B$ denotes the center of $B$, and
        \begin{align*}
        I&=\sum\limits_{j=0}^{m-1}\left(\frac{1}{|B|}\int_B (b(y)-b_{2B})^{(m-j)\gamma}|T_{\alpha,b}^j f(y)|^\gamma dy\right)^{\frac{1}{\gamma}},\\
        II&=\left(\frac{1}{|B|}\int_B |T_\alpha((b-b_{2B})^m f_1) (y)|^\gamma dy\right)^{\frac{1}{\gamma}},\\
        III&=\left(\frac{1}{|B|}\int_B |T_\alpha((b-b_{2B})^m f_2) (y)-T_\alpha((b-b_{2B})^m f_2)(x_B)|^\gamma dy\right)^{\frac{1}{\gamma}}.
        \end{align*}

        For simplicity, we will assume $\|b\|_{\Lambda(\delta)}=1$. We shall first estimate $I$. From Lemma \ref{puntualLip} \ref{puntoprom} we have
        \begin{align*}
        I&\lesssim\sum\limits_{j=0}^{m-1}\|b\|^{m-j}_{\Lambda(\delta)}|B|^{\frac{\delta (m-j)}{n}}\left(\frac{1}{|B|}\int_B |T_{\alpha,b}^j f(y)|^\gamma dy\right)^{\frac{1}{\gamma}}\\
        &=C \sum_{j=0}^{m-1}
        \left(\frac{1}{|B|^{1-\frac{(m-j)\delta \gamma}{n}}}\int_{B}\abs{T_{\alpha,b}^j f(y)}^{\gamma}\right)^{1/\gamma} \\
        &\lesssim  \sum_{j=0}^{m-1}
        \mathcal{M}_{\theta_{j},\gamma}(|T_{\alpha,b}^jf|)(x)
        \end{align*}
        where $\theta_{j}=(m-j)\delta $. Note that the last maximal operator is of fractional-type since $0<\theta_j<(m-j)(n-\alpha)/m\leq n$ for every $0\leq j\leq m-1$.

            We will now estimate $II$.
If $y\in B$ and $z\in 2B$, then $|y-z|< 3R$ and we have, by Lemma \ref{Salfabola}, that
            \begin{align*}
            II&\leq \frac{1}{|B|}\int_B \left(\int_{2B}|K_{\alpha}(y-z)(b(z)-b_{2B})^m f(z)|dz\right)dy\\
            &\leq \frac{1}{|B|}\int_{2B}|b(z)-b_{2B}|^m |f(z)|\left( \int_{B(z, 3R)}|K_{\alpha}(y-z)|dy\right)dz\\
            &\lesssim \frac{|3B|^\frac{\alpha}{n}}{|B|}\int_{2B} |b(z)-b_{2B}|^m |f (z)| dy
            \end{align*}

            From Lemma \ref{puntualLip} \ref{puntoprom}, we can estimate $|b(z)-b_{2B}|^m$ by $\|b\|_{\Lambda(\delta)}^m |2B|^{\frac{m\delta}{n}}=\|b\|_{\Lambda(\delta)}^m |2B|^{\frac{\theta_0}{n}}$ to obtain
            \begin{align*}
            II&\lesssim \|b\|_{\Lambda(\delta)}^m  \frac{1}{|2B|^{1-\frac{\theta_0+\alpha}{n}}}\int_{2B}  |f (z)| dz\leq \|b\|_{\Lambda(\delta)}^m M_{\theta_0+\alpha}f(x).
            \end{align*}
            Since $0<\delta<(n-\alpha)/m$, it is clear that $0<\theta_0+\alpha<n$, so $ M_{\theta_0+\alpha}$ is a fractional-type maximal operator.


        In order to estimate $III$, we first observe that, by Jensen's inequality
        \[III\leq \frac{1}{|B|}\int_B |T_{\alpha,b}^m\left((b-b_{2B})^m f_2\right)(y)-T_\alpha((b-b_{2B})^m f_2)(x_B)|dy,\]
and, setting $B_{j}=2^{j}B$, the integrand can be estimated, using Lemma \ref{puntualLip} \ref{puntoprom}, as follows
        \begin{align}\label{III}
        |T_{\alpha,b}^m\left((b-b_{2B})^m f_2\right)(y)&-T_\alpha((b-b_{2B})^m f_2)(x_B)|\\
        \nonumber&\leq \sum\limits_{j=1}^\infty \int_{B_{j+1}\setminus B_j}|K_\alpha(y-z)-K_\alpha(x_B-z)||b(z)-b_{2B}|^m|f(z)|dz\\
        \nonumber&\lesssim \|b\|_{\Lambda(\delta)}^{m} \sum\limits_{j=1}^\infty |B_{j+1}|^{\frac{m\delta}{n}}\int_{B_{j+1}\setminus B_j}|K_\alpha(y-z)-K_\alpha(x_B-z)| |f(z)|dz.
\end{align}
%

Here, we must distinguish the cases $K_\alpha\in H_{\alpha,\infty}^*$ and $K_\alpha\in H_{\alpha, \Phi}$.

If $K_\alpha\in H_{\alpha,\infty}^*$,
\begin{align*}
    |T_{\alpha,b}^m((b-b_{2B})^m f_2)(y)-T_\alpha&((b-b_{2B})^m f_2)(x_B)|\\
        &\lesssim \|b\|_{\Lambda(\delta)}^m \sum\limits_{j=1}^\infty|B_{j+1}|^{\frac{\delta m}{n}}\int_{B_{j+1}\setminus B_j}\frac{|y-x_B|^\eta}{|y-z|^{n-\alpha+\eta}} |f(z)|dz\\
        &\lesssim\|b\|_{\Lambda(\delta)}^m\sum_{j=1}^{\infty}|B_{j+1}|^{\frac{m\delta+\alpha}{n}} 2^{-j\eta} \frac{1}{|B_{j+1}|}\int_{B_{j+1}}|f(z)|dz\\
        &\thickapprox \|b\|_{\Lambda(\delta)}^m\sum_{j=1}^{\infty} 2^{-j\eta}\frac{1}{|B_{j+1}|^{1-\frac{\theta_0+\alpha}{n}}}\int_{B_{j+1}}|f(y)|dy\\
        &\leq \norm{b}_{\Lambda(\delta)}^{m}M_{\theta_0+\alpha}f(x)\sum_{j=1}^{\infty}2^{-j\eta}\lesssim \norm{b}_{\Lambda(\delta)}^{m}M_{\theta_0+\alpha}f(x),
        \end{align*}
since $\eta>0$. Therefore
\[III\lesssim \|b\|_{\Lambda(\delta)}^m M_{\theta_0+\alpha}f(x).\]

Let us now consider the case $K_\alpha\in H_{\alpha, \Phi}$. Applying H\"older's inequality with $\Phi$ and $\widetilde{\Phi}$ in \eqref{III}, we obtain
\begin{align*}
|T_{\alpha,b}^m&((b-b_{2B})^m f_2)(y)-T_\alpha((b-b_{2B})^m f_2)(x_B)|\\
&\lesssim \|b\|_{\Lambda(\delta)}^m \sum\limits_{j=1}^\infty |B_{j+1}|^{\frac{m\delta}{n}+1} \|\left(K_\alpha(\cdot-(y-x_B))-K_\alpha(\cdot)\right)\chi_{|\cdot|\sim 2^j R}\|_{\Phi,B_{j+1}}  \|f\|_{\widetilde{\Phi},B_{j+1}}\\
&\lesssim \|b\|_{\Lambda(\delta)}^m \sum\limits_{j=1}^\infty |B_{j+1}|^{1-\frac{\alpha}{n}} \|\left(K_\alpha(\cdot-(y-x_B))-K_\alpha(\cdot)\right)\chi_{|\cdot|\sim 2^j R}\|_{\Phi,B_{j+1}}  |B_{j+1}|^{\frac{m\delta+\alpha}{n}}\|f\|_{\widetilde{\Phi},B_{j+1}}\\
&\lesssim \|b\|_{\Lambda(\delta)}^m  M_{\theta_0+\alpha,\widetilde{\Phi}}f(x) \sum\limits_{j=1}^\infty(2^{j}R)^{n-\alpha} \|\left(K_\alpha(\cdot-(y-x_B))-K_\alpha(\cdot)\right)\chi_{|\cdot|\sim 2^j R}\|_{\Phi,B_{j+1}}\\
&\lesssim  \|b\|_{\Lambda(\delta)}^m M_{\theta_0+\alpha,\widetilde{\Phi}}f(x).
\end{align*}
Therefore,
\[III\lesssim \|b\|_{\Lambda(\delta)}^m M_{\theta_0+\alpha,\widetilde{\Phi}}f(x).\]

Combining all these estimates, we obtain the desired pointwise inequalities.
\end{proof}

The following result is a variant of the well-known Fefferman-Stein's inequality (see \cite{GCRF}) and it will be a key estimate to prove Theorem \ref{teo1}.

\begin{lem}[\cite{P2}]Let $0<p<\infty$ and $0<\gamma<1$. Let $w$ be a weight in the $A_\infty$ class. Then, there exists a positive constant $C$ such that
\begin{equation}\label{FeffSt}
\int_{\mathbb R^n} M_\gamma f(x)^p w(x) dx\leq C[w]_{A_\infty} \int_{\mathbb R^n} M_{0,\gamma}^\sharp f(x)^p w(x) dx
\end{equation}
for every measurable function $f$.
\end{lem}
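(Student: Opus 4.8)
The plan is to deduce \eqref{FeffSt} from the classical weighted Fefferman--Stein inequality by the standard homogeneity trick. Recall that $M_\gamma f=M_0(\abs{f}^\gamma)^{1/\gamma}$ and, by the definition given just above, $M_{0,\gamma}^\sharp f=M_0^\sharp(\abs{f}^\gamma)^{1/\gamma}$. Hence, setting $g=\abs{f}^\gamma$ and $s=p/\gamma\in(0,\infty)$, and noting that as $f$ ranges over measurable functions $g$ ranges over all nonnegative measurable functions, inequality \eqref{FeffSt} is equivalent to
\[\int_{\mathbb R^n}(M_0 g)^{s}w\le C[w]_{A_\infty}\int_{\mathbb R^n}(M_0^\sharp g)^{s}w,\]
that is, the weighted Fefferman--Stein estimate for the (arbitrary, possibly small) exponent $s$ and the $A_\infty$ weight $w$. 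So it suffices to prove this last inequality with a constant at most linear in $[w]_{A_\infty}$ and then undo the substitution.

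To prove it I would run the good-$\lambda$ method. First, comparing $M_0$ with a dyadic maximal operator $M^{d}$ via $M_0 g\lesssim \sum_{\ell} M^{d_\ell}g$ over finitely many shifted dyadic grids, one reduces to $M^{d}$, so that for each $\lambda>0$ the set $\Omega_\lambda=\{M^{d}g>\lambda\}$ is a disjoint union of maximal dyadic cubes $Q_i$. On each $Q_i$ one has the classical pointwise good-$\lambda$ bound
\[\abs{\{x\in Q_i:\ M^{d}g(x)>2\lambda,\ M_0^\sharp g(x)\le \epsilon\lambda\}}\le c_n\,\epsilon\,\abs{Q_i},\]
obtained by subtracting from $g$ the average over the dyadic parent of $Q_i$ and absorbing the excess into $M_0^\sharp g$. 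Summing over $i$ and invoking the $A_\infty$ characterization of $w$ (there exist $\theta>0$ and a constant depending linearly on $[w]_{A_\infty}$ such that $w(E)/w(Q)\lesssim(\abs{E}/\abs{Q})^{\theta}$ for $E\subset Q$) upgrades this to
\[w(\{M^{d}g>2\lambda,\ M_0^\sharp g\le\epsilon\lambda\})\le C\,\epsilon^{\theta}\,w(\Omega_\lambda).\]
Multiplying by $s\lambda^{s-1}$ and integrating over $\lambda\in(0,\infty)$, the layer-cake formula turns the left-hand side into $2^{-s}\int_{\mathbb R^n}(M^dg)^s w$, while the right-hand side is bounded by $C\epsilon^{\theta}\int_{\mathbb R^n}(M^dg)^s w+\epsilon^{-s}\int_{\mathbb R^n}(M_0^\sharp g)^s w$; choosing $\epsilon$ small enough (quantitatively in $s$ and $[w]_{A_\infty}$) to absorb the first term yields $\int_{\mathbb R^n}(M^dg)^s w\le C[w]_{A_\infty}\int_{\mathbb R^n}(M_0^\sharp g)^s w$, and hence the claim after passing back to $M_0 g$ and to $f$.

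The one genuine technical point — and the main obstacle — is that the absorption step requires $\int_{\mathbb R^n}(M^dg)^s w<\infty$ a priori. This is handled in the usual way: one first proves the inequality with $M^dg$ replaced by the truncation $\min\{M^dg,N\}\chi_{B(0,N)}$, which is bounded with compact support and hence in $L^s(w)$ since $w$ is locally integrable, observing that $M_0^\sharp$ of the truncation is controlled by $C\,M_0^\sharp g$, and then lets $N\to\infty$ by monotone convergence; if the right-hand side of \eqref{FeffSt} is infinite there is nothing to prove. The remaining steps are routine, and the linear dependence of the constant on $[w]_{A_\infty}$ may alternatively be read off from a sparse-domination proof of the Fefferman--Stein inequality as in \cite{P2}.
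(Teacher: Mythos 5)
The paper does not prove this lemma: it is quoted as a black box from P\'erez \cite{P2}, so there is no internal argument to compare yours against. On its own terms your reduction via $M_\gamma f=M_0(|f|^\gamma)^{1/\gamma}$, $M_{0,\gamma}^\sharp f=M_0^\sharp(|f|^\gamma)^{1/\gamma}$, $g=|f|^\gamma$, $s=p/\gamma$ is exactly right, and the good-$\lambda$ scheme you then run (pass to a dyadic maximal operator, Calder\'on--Zygmund decomposition of $\{M^d g>\lambda\}$, local good-$\lambda$ estimate, upgrade with the $A_\infty$ comparability $w(E)/w(Q)\lesssim(|E|/|Q|)^\theta$, integrate and absorb) is the classical route to the weighted Fefferman--Stein inequality and gives the statement qualitatively.

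Two caveats are worth flagging. First, the constant your argument produces is not linear in $[w]_{A_\infty}$: with $\theta\sim 1/[w]_{A_\infty}$ the absorption step forces $\epsilon\lesssim c^{-1/\theta}$, so $\epsilon^{-s}$, and hence the final constant, is exponential in $[w]_{A_\infty}$. You do acknowledge this, but the deferral ``as in \cite{P2}'' is anachronistic: \cite{P2} is from 1997 and contains no sparse-domination argument; the sharp linear-in-$[w]_{A_\infty}$ dependence is a later refinement. Since the paper uses the lemma only qualitatively, this is cosmetic here, but the constant you prove is not the one stated. Second, your a priori finiteness fix is garbled: you propose to replace $M^d g$ by $\min\{M^d g,N\}\chi_{B(0,N)}$ and then say ``$M_0^\sharp$ of the truncation is controlled by $C\,M_0^\sharp g$,'' which applies $M_0^\sharp$ to $M^d g$ rather than to $g$ and does not produce an inequality you can feed back into the good-$\lambda$ machine. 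The standard repair is either to cut off the $\lambda$--integration at a finite level $N$ (needing only $w(\{M^d g>\lambda\})<\infty$ for $\lambda>0$) or to truncate $g$ itself and pass to the limit; as written, your truncation step does not go through.
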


We shall also need two results involving the boundedness of fractional maximal operators associated with Young functions, that can be found in  \cite{BDP}.

\begin{thm}[\cite{BDP}]\label{BDPcteH} Let $0<\alpha<n$, $1\leq \beta<p<n/\alpha$ and $1/q=1/p-\alpha/n$. Let $w$ be a weight such that $w^\beta\in A_{p/\beta,q/\beta}$. Let $\Psi$ be a Young function that satisfies $\Psi\in\mathcal B_{\alpha,\beta}$. Then, $M_{\alpha,\Psi}$ is bounded from $L^{p}(w^p,\mathbb{R}^n)$ into $L^{q}(w^q,\mathbb{R}^n)$.
\end{thm}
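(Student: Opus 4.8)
The plan is to remove the parameter $\beta$ by the classical power trick and then to settle the resulting case $\beta=1$ by a dyadic Calder\'on--Zygmund (sparse) argument combined with a generalized H\"older inequality.

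\emph{Step 1: reduction to $\beta=1$.} Given $f$, set $g=|f|^{\beta}$, $v=w^{\beta}$ and, for the Young function $\Psi$, put $\Psi_{\beta}(t)=\Psi(t^{1/\beta})$ (read through the Luxemburg formula). A change of variables in that formula yields $\|f\|_{\Psi,B}^{\beta}=\||f|^{\beta}\|_{\Psi_{\beta},B}$ for every ball $B$, whence, raising \eqref{maxfrac} to the power $\beta$,
\[
\bigl(M_{\alpha,\Psi}f\bigr)^{\beta}=M_{\alpha\beta,\Psi_{\beta}}\bigl(|f|^{\beta}\bigr)\qquad\text{pointwise.}
\]
Writing $P=p/\beta$ and $Q=q/\beta$ one has $1<P<n/(\alpha\beta)$, $1/Q=1/P-\alpha\beta/n$ and, by hypothesis, $v\in A_{P,Q}=A_{p/\beta,q/\beta}$; moreover the claimed bound $\|M_{\alpha,\Psi}f\|_{L^{q}(w^{q})}\le C\|f\|_{L^{p}(w^{p})}$ is equivalent to $\|M_{\alpha\beta,\Psi_{\beta}}g\|_{L^{Q}(v^{Q})}\le C^{\beta}\|gv\|_{L^{P}}$. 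Finally, substituting $t=s^{\beta}$ in $\int_{c}^{\infty}\Psi_{\beta}^{1+\sigma\alpha\beta/n}(t)\,t^{-\sigma}\,\tfrac{dt}{t}$ produces a constant multiple of $\int_{c^{1/\beta}}^{\infty}\Psi^{1+\rho\alpha/n}(s)\,s^{-\rho}\,\tfrac{ds}{s}$ with $\rho=\sigma\beta$, the threshold $\sigma>n/(n-\alpha\beta)$ matching $\rho>n\beta/(n-\alpha\beta)$; since also $t^{-\alpha\beta/n}\Psi_{\beta}^{-1}(t)=(t^{-\alpha/n}\Psi^{-1}(t))^{\beta}$, the hypothesis $\Psi\in\mathcal B_{\alpha,\beta}$ is exactly $\Psi_{\beta}\in\mathcal B_{\alpha\beta,1}$. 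Hence we may assume $\beta=1$.

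\emph{Step 2: the case $\beta=1$.} We must prove that $M_{\alpha,\Psi}$ maps $L^{p}(w^{p})$ into $L^{q}(w^{q})$ when $w\in A_{p,q}$, $1/q=1/p-\alpha/n$, $t^{-\alpha/n}\Psi^{-1}(t)$ is an inverse Young function and $\Psi^{1+\rho\alpha/n}\in B_{\rho}$ for all $\rho>n/(n-\alpha)$. By the three--grid lemma it suffices to bound the dyadic operator $M^{\mathcal D}_{\alpha,\Psi}f=\sup_{Q\in\mathcal D,\,Q\ni x}|Q|^{\alpha/n}\|f\|_{\Psi,Q}$ for $f\ge0$. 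The standard stopping--time construction on the level sets $\{M^{\mathcal D}_{\alpha,\Psi}f>2^{k}\}$ produces a sparse family $\mathcal S$ with $M^{\mathcal D}_{\alpha,\Psi}f\lesssim\sum_{Q\in\mathcal S}|Q|^{\alpha/n}\|f\|_{\Psi,Q}\chi_{Q}$, so by $L^{q}$--$L^{q'}$ duality it is enough to estimate $\sum_{Q\in\mathcal S}|Q|^{\alpha/n}\|f\|_{\Psi,Q}\int_{Q}h\,w^{q}$ for $h\ge0$ with $\|h\|_{L^{q'}}=1$. On each $Q$ write $f=(fw)w^{-1}$ and apply the generalized H\"older inequality with the $L^{p}$--average and a Young function $\Xi$ (a bump of $\widetilde{\Psi}$ type, whose inverse is $t^{-1/p}\Psi^{-1}(t)$) for $w^{-1}$, so that $|Q|^{\alpha/n}\|f\|_{\Psi,Q}\lesssim |Q|^{\alpha/n}\|fw\|_{p,Q}\|w^{-1}\|_{\Xi,Q}$. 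The $A_{p,q}$ condition bounds the local quantities $|Q|^{\alpha/n+1/q-1/p}(\tfrac1{|Q|}\int_{Q}w^{q})^{1/q}\|w^{-1}\|_{\Xi,Q}$ by a fixed power of $[w]_{A_{p,q}}$ --- this is where the membership $\Psi^{1+\rho\alpha/n}\in B_{\rho}$ is used --- and H\"older's inequality over $\mathcal S$, together with the $L^{p}$--boundedness of the relevant dyadic Orlicz maximal operator and of the Hardy--Littlewood maximal operator applied to $h\,w^{q}$, closes the estimate. (Alternatively, once $\beta=1$ one may invoke directly the known two--weight/off--diagonal extrapolation theorem for fractional Orlicz maximal operators, verifying its testing condition with $u=w^{q}$ and $\sigma=w^{-p'}$ from $w\in A_{p,q}$ and the $B_{\rho}$ hypothesis on $\Psi$.)

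\emph{Main obstacle.} The delicate step is the summation in Step 2: one has to check that the fractional bump condition $\Psi^{1+\rho\alpha/n}\in B_{\rho}$ --- valid for all $\rho$ above the threshold $n\beta/(n-\alpha\beta)$, which after the reduction reads $n/(n-\alpha)$ --- is precisely strong enough to sum the stopping--cube series while consuming only a bounded power of $[w]_{A_{p,q}}$. This forces the peculiar exponent $1+\rho\alpha/n$ and the threshold on $\rho$ to be matched against the Sobolev relation $1/q=1/p-\alpha/n$; by contrast, the power trick of Step 1 and the passage to the dyadic sparse model are routine.
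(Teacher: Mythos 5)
The paper does not prove this statement: it is quoted verbatim from the reference [BDP] and used as a black box (alongside Theorem \ref{BDPcteLlogL}), so there is no internal argument to compare yours against. Judged on its own terms, your Step 1 is sound: the identity $\|f\|_{\Psi,B}^{\beta}=\||f|^{\beta}\|_{\Psi_{\beta},B}$ with $\Psi_{\beta}(t)=\Psi(t^{1/\beta})$, the pointwise identity $(M_{\alpha,\Psi}f)^{\beta}=M_{\alpha\beta,\Psi_{\beta}}(|f|^{\beta})$, and the bookkeeping of exponents, weights and the $B_{\rho}$ integrals all check out. The only caveat is that $\Psi(t^{1/\beta})$ and the function inverse to $(t^{-\alpha/n}\Psi^{-1}(t))^{\beta}$ are in general only equivalent to Young functions (composing with the concave map $t\mapsto t^{1/\beta}$ destroys convexity); this is repairable by the usual normalization but should be said.

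Step 2 is where the theorem actually lives, and there the argument is asserted rather than proved. After sparse domination and duality you need, on each stopping cube $Q$, a uniform bound for $\bigl(\tfrac{1}{|Q|}\int_{Q}w^{q}\bigr)^{1/q}\|w^{-1}\|_{\Xi,Q}$, where $\Xi$ is a bump strictly larger than $t^{p'}$. The $A_{p,q}$ condition only controls $\bigl(\tfrac{1}{|Q|}\int_{Q}w^{q}\bigr)^{1/q}\|w^{-1}\|_{p',Q}$; to replace the $\Xi$-average of $w^{-1}$ by its $L^{p'}$-average you must invoke the self-improvement of the one-weight class (the reverse H\"older inequality for $w^{-p'}\in A_{\infty}$, equivalently the openness $w\in A_{p-\epsilon,q}$) and verify that the growth of $\Xi$ forced by $\Psi\in\mathcal B_{\alpha,\beta}$ is compatible with the available reverse-H\"older exponent. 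You do neither; instead you attribute this bound to the hypothesis $\Psi^{1+\rho\alpha/n}\in B_{\rho}$, which is an integrability condition on $\Psi$ alone and cannot control a weight average. That hypothesis is what makes the auxiliary Orlicz maximal operator bounded on the correct Lebesgue space and makes the sparse sum converge; conflating its role with that of $A_{p,q}$ leaves unestablished precisely the step you yourself single out as the main obstacle. As written, the proposal is a plausible roadmap rather than a proof.
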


Note that if $\Psi=t^\beta(1+\log^{+}t)^{\gamma}$ for any $\gamma\geq 0$, then $\Psi\in \mathcal{B}_{\alpha,\beta}$ and the following result holds.


\begin{thm}[\cite{BDP}]\label{BDPcteLlogL} Let $0<\alpha<n$, $1<p<n/\alpha$ and $1/q=1/p-\alpha/n$. Let $w$ be a weight and  $\Psi(t)=t^\beta(1+\log^+t)^\gamma$ where $1\leq \beta<p$ and $\gamma\geq 0$. Then, $M_{\alpha,\Psi}$ is bounded from $L^{p}(w^p,\mathbb{R}^n)$ into $L^{q}(w^q,\mathbb{R}^n)$ if and only if $w^\beta\in A_{p/\beta,q/\beta}$.
\end{thm}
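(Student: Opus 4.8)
The plan is to run the scheme used for Theorem~\ref{limitcase}, replacing the pointwise smoothness estimates coming from $H_{\alpha,\infty}^{*}$ by the Orlicz–Hölder estimates of Lemma~\ref{pointwise}\,\ref{Hormanders} (now with the refined kernel condition $H_{\alpha,\Phi,m}(\delta)$), and replacing the Muckenhoupt–Wheeden facts for $I_\alpha$/$M_\alpha$ by Theorems~\ref{BDPcteH} and~\ref{BDPcteLlogL} together with the standing hypotheses $\Phi^{-1}(t)\lesssim t^{(\beta-1)/r}$ and $\widetilde{\Phi}\in\mathcal B_{m\delta+\alpha,\beta}$. Fix a ball $B$; expanding $b(x)-b(y)=(b(x)-b_{B})-(b(y)-b_{B})$ and the $m$-th power, and splitting $f=f_{1}+f_{2}$ with $f_{1}=f\chi_{2B}$, one writes $T_{\alpha,b}^{m}f(x)=L_{B}(x)+G_{B}(x)$, where $L_{B}$ is the same expansion applied to $f_{1}$ (the local part) and $G_{B}(x)$ collects the terms built from $T_{\alpha}((b-b_{B})^{k}f_{2})$, i.e.\ $G_{B}(x)=\sum_{k}c_{k}S(x,x,k)$ up to the signs of the binomial expansion (cf.\ \eqref{reductionorder}). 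The crucial observation is that, for a fixed $u$, the integrand of \eqref{ecu2H} is exactly the oscillation over $B$ of $\sum_{k}c_{k}S(\cdot,u,k)$ (the $k=m$ term is constant in the first variable and disappears), that is, the ``$f_{2}$-part of $T^{m}_{\alpha,b}f$ with the outer variable of the operator frozen at $u$''. Hence the theorem reduces, on every ball, to comparing the oscillation of $G_{B}$ with this frozen version and with $L_{B}$.

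Two ingredients are needed. The first is unconditional: using $|b(z)-b_{B}|\lesssim\|b\|_{\Lambda(\delta)}|2B|^{\delta/n}$ on $2B$ (Lemma~\ref{puntualLip}\,\ref{puntoprom}), the bound $|T_{\alpha}g|\le\widetilde T_{\alpha}|g|$ with $\widetilde T_{\alpha}$ the operator with kernel $|K_{\alpha}|$, and Lemma~\ref{Salfabola}, one gets $\int_{B}|L_{B}-(L_{B})_{B}|\lesssim\|b\|_{\Lambda(\delta)}^{m}|B|^{(m\delta+\alpha)/n}\int_{2B}|f|$; since $(m\delta+\alpha)/n-\delta/n-1=1/r-1$, a standard Hölder-plus-$A_{1}$ argument (using that $w^{\beta}\in A_{r/\beta,\infty}$ gives $w^{-r/(r-\beta)}\in A_{1}$) turns this into $\tfrac{\|w\chi_{B}\|_{\infty}}{|B|^{1+\delta/n}}\int_{B}|L_{B}-(L_{B})_{B}|\lesssim\|b\|_{\Lambda(\delta)}^{m}\|fw\|_{L^{r}}$. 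The second is the smooth difference: for $x,u\in B$ one writes $\sum_{k}c_{k}[S(x,x,k)-S(x,u,k)]=\sum_{k}c_{k}(b(x)-b_{B})^{m-k}\bigl[T_{\alpha}((b-b_{B})^{k}f_{2})(x)-T_{\alpha}((b-b_{B})^{k}f_{2})(u)\bigr]$, splits $\mathbb{R}^{n}\setminus 2B$ into dyadic annuli, bounds $|b(x)-b_{B}|^{m-k}|b(z)-b_{B}|^{k}\lesssim\|b\|_{\Lambda(\delta)}^{m}|2^{j+1}B|^{m\delta/n}$ on the $j$-th annulus, and applies generalized Orlicz–Hölder with $\Phi,\widetilde\Phi$ together with $K_{\alpha}\in H_{\alpha,\Phi,m}(\delta)$, exactly as in the estimate of the term $III$ in the proof of Lemma~\ref{pointwise}\,\ref{Hormanders}; this yields $\bigl|\sum_{k}c_{k}[S(x,x,k)-S(x,u,k)]\bigr|\lesssim\|b\|_{\Lambda(\delta)}^{m}M_{m\delta+\alpha,\widetilde\Phi}f(u)$ for all $x\in B$. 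Finally one needs the endpoint bound $\tfrac{\|w\chi_{B}\|_{\infty}}{|B|^{\delta/n}}M_{m\delta+\alpha,\widetilde\Phi}f(u)\lesssim\|fw\|_{L^{r}}$ for $u\in B$: since $(m\delta+\alpha)/n-1/r=\delta/n$ here, writing $f=(fw)w^{-1}$ and applying generalized Orlicz–Hölder with the Young functions $t^{r}$ and $t^{r/(r-\beta)}$ — admissible because $\Phi^{-1}(t)\lesssim t^{(\beta-1)/r}$ forces $\widetilde\Phi^{-1}(t)\gtrsim t^{1/r}t^{(r-\beta)/r}$ — and using the $A_{1}$ estimate for $w^{-r/(r-\beta)}$ gives $|B'|^{(m\delta+\alpha)/n}\|f\|_{\widetilde\Phi,B'}\lesssim|B'|^{\delta/n}\|w\chi_{B'}\|_{\infty}^{-1}\|fw\|_{L^{r}}$ for every ball $B'$, and then taking the supremum over $B'\ni u$ and comparing $\|w\chi_{B}\|_{\infty}|B|^{-\delta/n}$ with $\|w\chi_{B'}\|_{\infty}|B'|^{-\delta/n}$ for intersecting balls (again via the $A_{1}$ condition) closes the estimate.

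With these in hand both implications are just triangle inequalities. For \eqref{ecu2H}$\Rightarrow$\ref{ecu1H}: pick any $u\in B$ and bound $\int_{B}|G_{B}-(G_{B})_{B}|\le\int_{B}\bigl|\sum_{k}c_{k}[S(\cdot,u,k)-(S(\cdot,u,k))_{B}]\bigr|+2\int_{B}\bigl|\sum_{k}c_{k}[S(x,x,k)-S(x,u,k)]\bigr|$, controlling the first term by \eqref{ecu2H} and the second by the smooth difference and the endpoint bound, and add the unconditional bound for $L_{B}$; this gives $\tfrac{\|w\chi_{B}\|_{\infty}}{|B|^{1+\delta/n}}\int_{B}|T_{\alpha,b}^{m}f-(T_{\alpha,b}^{m}f)_{B}|\lesssim\|b\|_{\Lambda(\delta)}^{m}\|fw\|_{L^{r}}$. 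For \ref{ecu1H}$\Rightarrow$\eqref{ecu2H} (assuming, as we may, $\|fw\|_{L^{r}}<\infty$): bound $\int_{B}\bigl|\sum_{k}c_{k}[S(\cdot,u,k)-(S(\cdot,u,k))_{B}]\bigr|\le\int_{B}|T_{\alpha,b}^{m}f-(T_{\alpha,b}^{m}f)_{B}|+\int_{B}|L_{B}-(L_{B})_{B}|+2\int_{B}\bigl|\sum_{k}c_{k}[S(x,x,k)-S(x,u,k)]\bigr|$ and use \ref{ecu1H} on the first term and the previous estimates on the other two. If one prefers to organize $L_{B}$ through the decomposition \eqref{reductionorder}, one must additionally control the lower order commutators $T^{j}_{\alpha,b}$, $1\le j\le m-1$, on weighted spaces; at $r=n/((m-1)\delta+\alpha)$ they map $L^{r}_{w}$ into a weighted $L^{q_{j}}$ for $j\le m-2$ and into weighted $BMO$ for $j=m-1$, by Theorems~\ref{teo1H} and~\ref{HormanderLipw} applied with $m$ replaced by $j$, using the inclusions $\mathcal B_{m\delta+\alpha,\beta}\subset\mathcal B_{j\delta+\alpha,\beta}$ and $A_{r/\beta,\infty}\subset A_{r/\beta,q_{j}/\beta}$.

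I expect the main obstacle to be the endpoint bound $\tfrac{\|w\chi_{B}\|_{\infty}}{|B|^{\delta/n}}M_{m\delta+\alpha,\widetilde\Phi}f(u)\lesssim\|fw\|_{L^{r}}$: it sits exactly at the critical exponent $r=n/((m-1)\delta+\alpha)$, so the three hypotheses on $\Phi$, $\widetilde\Phi$ and $w$ must all be exploited at once, and the comparison $\|w\chi_{B}\|_{\infty}|B|^{-\delta/n}\lesssim\|w\chi_{B'}\|_{\infty}|B'|^{-\delta/n}$ for intersecting balls has to be extracted carefully from $w^{-r/(r-\beta)}\in A_{1}$, distinguishing the cases $|B'|\gtrsim|B|$ and $|B'|\lesssim|B|$. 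The remaining points — the unconditional estimate for $L_{B}$, the treatment of the lower order commutators, and the convergence of the defining integrals of $S(x,u,k)$ for $f\in L^{r}_{w}$ — are routine given the quoted results.
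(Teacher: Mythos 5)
Your proposal does not address the statement you were asked to prove. The statement is Theorem \ref{BDPcteLlogL}: a characterization of the weights $w$ for which the generalized fractional maximal operator $M_{\alpha,\Psi}$, with $\Psi(t)=t^{\beta}(1+\log^{+}t)^{\gamma}$, is bounded from $L^{p}(w^{p})$ into $L^{q}(w^{q})$, namely $w^{\beta}\in A_{p/\beta,q/\beta}$. What you have written is instead a (reasonably detailed) strategy for Theorem \ref{limitcaseH}, the endpoint characterization of the commutator $T^{m}_{\alpha,b}$ with H\"ormander-type kernels: everything in your argument concerns the decomposition into the terms $S(x,u,k)$, the conditions \eqref{ecu1H} and \eqref{ecu2H}, the kernel classes $H_{\alpha,\Phi,m}(\delta)$, and the symbol $b\in\Lambda(\delta)$. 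None of these objects appear in Theorem \ref{BDPcteLlogL}, which involves no kernel, no commutator and no symbol. (For the record, the paper itself gives no proof of Theorem \ref{BDPcteLlogL} either; it is quoted from the reference \cite{BDP} and used as a black box.)

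A proof of the actual statement would have to contain two genuinely different ingredients, neither of which is present in your write-up. For the sufficiency (``if'') direction one must verify that $\Psi(t)=t^{\beta}(1+\log^{+}t)^{\gamma}$ belongs to the class $\mathcal B_{\alpha,\beta}$ --- i.e.\ that $t^{-\alpha/n}\Psi^{-1}(t)$ is the inverse of a Young function and that $\int_{c}^{\infty}\Psi^{1+\rho\alpha/n}(t)\,t^{-\rho-1}\,dt<\infty$ for all $\rho>n\beta/(n-\alpha\beta)$ --- and then run the weighted norm inequality for $M_{\alpha,\Psi}$ under $w^{\beta}\in A_{p/\beta,q/\beta}$ (this is the content of Theorem \ref{BDPcteH}). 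For the necessity (``only if'') direction, which is the part that makes the statement an equivalence, one must test the assumed boundedness on suitable functions adapted to a fixed ball $B$ (typically $f=w^{-\beta'}\chi_{B}$ or a variant thereof), use $\|f\|_{\Psi,B}\gtrsim \|f\|_{\beta,B}$ (valid since $\Psi(t)\geq t^{\beta}$), and extract the $A_{p/\beta,q/\beta}$ condition on $w^{\beta}$ from the resulting inequality. You should redo the exercise for the correct theorem.
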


In order to prove Theorem \ref{limitcase}, we shall need the following estimate.

\begin{lem}\label{lema m>1}Let $0< \delta<\min\{\eta,(n-\alpha)/(m-1)\}$, for $0<\eta\leq 1$. Let  $r=n/((m-1)\delta+\alpha)$, $w\in A_{r,\infty}$, $b\in \Lambda(\delta)$ and $f\in L^r_w(\mathbb R^n)$. Let $B\subset \mathbb{R}^{n}$ be a ball and $f_2=f\chi_{\mathbb R^n\setminus 2B}$. If $T_\alpha$ is a fractional integral operator with kernel $K_\alpha\in S_\alpha\cap H_{\alpha,\infty}^*$, then, for every $x,u\in B$,
    \begin{equation*}
    \abs{T_{\alpha}((b-b_B)^{k}f_{2})(x)-T_{\alpha}((b-b_B)^{k}f_{2})(u)}\lesssim\frac{\norm{b}_{\Lambda(\delta)}^{k}\norm{fw}_{r}
    |B|^{\frac{\delta(k-m+1)}{n}}}{\norm{w\chi_B}_{\infty}}
    \end{equation*}
for each $k=0,...,m$.
\end{lem}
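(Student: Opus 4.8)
\textbf{Proof proposal for Lemma \ref{lema m>1}.}

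The plan is to estimate the difference directly from the kernel representation, exploiting the smoothness condition $H_{\alpha,\infty}^*$ to control $|K_\alpha(x-z)-K_\alpha(u-z)|$ on the dyadic annuli of $2B$. Writing $B=B(x_B,R)$ and $B_j=2^jB$, for $x,u\in B$ we have $|x-u|\le 2R$ and $z\in B_{j+1}\setminus B_j$ implies $|x-z|\gtrsim 2^jR\ge 2|x-u|$, so that
\begin{align*}
\abs{T_\alpha((b-b_B)^k f_2)(x)-T_\alpha((b-b_B)^k f_2)(u)}
&\le \sum_{j=1}^\infty \int_{B_{j+1}\setminus B_j} \abs{K_\alpha(x-z)-K_\alpha(u-z)}\,\abs{b(z)-b_B}^k\,\abs{f(z)}\,dz\\
&\lesssim \sum_{j=1}^\infty \int_{B_{j+1}\setminus B_j} \frac{\abs{x-u}^\eta}{\abs{x-z}^{n-\alpha+\eta}}\,\abs{b(z)-b_B}^k\,\abs{f(z)}\,dz.
\end{align*}
Then $|x-u|^\eta\lesssim R^\eta$, on $B_{j+1}\setminus B_j$ we have $|x-z|^{-(n-\alpha+\eta)}\thickapprox (2^jR)^{-(n-\alpha+\eta)}$, and by Lemma \ref{puntualLip}\ref{puntoprom} we bound $|b(z)-b_B|^k\lesssim \|b\|_{\Lambda(\delta)}^k |B_{j+1}|^{k\delta/n}\thickapprox \|b\|_{\Lambda(\delta)}^k (2^jR)^{k\delta}$. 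Collecting the powers of $2^jR$, the $j$-th term is comparable to $\|b\|_{\Lambda(\delta)}^k\,2^{-j\eta}\,(2^jR)^{k\delta}(2^jR)^{\alpha-n}\int_{B_{j+1}}|f|$.

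Next I would insert the weight. By Hölder's inequality with exponents $r$ and $r'$ and the definition of $A_{r,\infty}$ (equivalently $w^{-r'}\in A_1$),
\[
\int_{B_{j+1}}|f(z)|\,dz \le \Big(\int_{B_{j+1}}|f w|^r\Big)^{1/r}\Big(\int_{B_{j+1}}w^{-r'}\Big)^{1/r'}\lesssim \|fw\|_r\,\frac{|B_{j+1}|^{1/r'}}{\|w\chi_{B_{j+1}}\|_\infty}\le \|fw\|_r\,\frac{|B_{j+1}|^{1/r'}}{\|w\chi_{B}\|_\infty},
\]
where the second inequality is the standard consequence of $w^{-r'}\in A_1$ that $|Q|^{-1}\int_Q w^{-r'}\lesssim \operatorname*{ess\,inf}_Q w^{-r'}=\|w\chi_Q\|_\infty^{-r'}$, and the last step uses monotonicity of the essential supremum in the ball. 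Since $|B_{j+1}|^{1/r'}\thickapprox (2^jR)^{n/r'}=(2^jR)^{n-n/r}=(2^jR)^{n-(m-1)\delta-\alpha}$, the $j$-th term becomes
\[
\|b\|_{\Lambda(\delta)}^k\,\frac{\|fw\|_r}{\|w\chi_B\|_\infty}\,2^{-j\eta}\,(2^jR)^{k\delta+\alpha-n+n-(m-1)\delta-\alpha}=\|b\|_{\Lambda(\delta)}^k\,\frac{\|fw\|_r}{\|w\chi_B\|_\infty}\,2^{-j\eta}\,(2^jR)^{(k-m+1)\delta}.
\]

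Finally I would sum the geometric-type series in $j$. The general term is $2^{-j\eta}(2^jR)^{(k-m+1)\delta}=R^{(k-m+1)\delta}\,2^{j[(k-m+1)\delta-\eta]}$; since $k\le m$ we have $(k-m+1)\delta\le\delta<\eta$, so the exponent $(k-m+1)\delta-\eta<0$ and the series converges, yielding a bound $\lesssim R^{(k-m+1)\delta}=|B|^{(k-m+1)\delta/n}=|B|^{\delta(k-m+1)/n}$ times the constant $\|b\|_{\Lambda(\delta)}^k\|fw\|_r/\|w\chi_B\|_\infty$, which is exactly the claimed estimate. I expect the only mildly delicate point to be the weight step: one must be careful that $w\in A_{r,\infty}$ is read as $w^{-r'}\in A_1$ (as defined in the preliminaries), so that the $A_1$ condition supplies the pointwise control $|B_{j+1}|^{-1}\int_{B_{j+1}}w^{-r'}\lesssim \|w\chi_{B_{j+1}}\|_\infty^{-r'}$, and that replacing $\|w\chi_{B_{j+1}}\|_\infty$ by the smaller $\|w\chi_B\|_\infty$ in the denominator is legitimate since $B\subset B_{j+1}$. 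Everything else is the routine annular decomposition already used in the proof of Lemma \ref{pointwise}.
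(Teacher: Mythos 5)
Your proposal is correct and follows the paper's own proof essentially verbatim: the same annular decomposition $B_{j+1}\setminus B_j$, the same use of $H_{\alpha,\infty}^*$ to pull out $|x-u|^\eta/|x-z|^{n-\alpha+\eta}$, the same application of Lemma \ref{puntualLip} to bound $|b-b_B|^k$, the same Hölder plus $A_{r,\infty}$ (read as $w^{-r'}\in A_1$) step to insert the weight, and the same geometric series summed via $(k-m+1)\delta<\eta$. The only difference is cosmetic (variable names and a more explicit unpacking of the $A_1$ estimate), not structural.
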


\begin{proof}[Proof of Lemma \ref{lema m>1}] If $K_\alpha\in S_\alpha\cap H_{\alpha,\infty}^*$, by taking $x,u\in B$, and $0\leq k\leq m$, and setting $B_j=2^j B$, we have from Lemma \ref{puntualLip} \ref{puntoprom} that
\begin{align}\label{lemmalimite}
  |T_{\alpha}((b-b_{B})^{k}f_{2})(x)-T_{\alpha}&((b-b_{B})^{k}f_{2})(u)|\\
  \nonumber &\leq \int_{(2B)^{c}}\abs{K_{\alpha}(x-y)-K_{\alpha}(u-y)}\abs{b(y)-b_{B}}^{k}\abs{f(y)}dy\\
   \nonumber &\lesssim \|b\|_{\Lambda(\delta)}^k \sum_{j=1}^{\infty}|B_{j+1}|^{\frac{\delta k}{n}}\int_{B_{j+1}\setminus B_j}\frac{\abs{x-u}^{\eta}}{\abs{y-u}^{n+\eta-\alpha}}\abs{f(y)}dy\\
   \nonumber&\lesssim \norm{b}_{\Lambda(\delta)}^{k}\sum_{j=1}^{\infty}
   \frac{\abs{B_{j+1}}^{\frac{\delta k}{n}}|B|^{\frac{\eta}{n}}}{\abs{B_{j+1}}^{\frac{n+\eta -\alpha}{n}}}\int_{B_{j+1}\setminus B_j}\abs{f(y)}dy.
  \end{align}

Now by H\"older's inequality and the fact that $w\in A_{r,\infty}$ with $r=n/(\alpha+(m-1)\delta)$, we get
\begin{align*}
 |T_{\alpha}((b-b_{B})^{k}f_{2})(x)-T_{\alpha}&((b-b_{B})^{k}f_{2})(u)|\\
& \lesssim\norm{b}_{\Lambda(\delta)}^{k}\norm{fw}_{r}
\sum_{j=1}^{\infty}
\frac{\abs{B_{j+1}}^{\frac{\delta k}{n}}}{\abs{B_{j+1}}^{1-\frac{\alpha}{n}}2^{j\eta}}
\norm{w^{-1}\chi_{B_{j+1}}}_{r'}\\
& \lesssim\norm{b}_{\Lambda(\delta)}^{k}\norm{fw}_{r}
\norm{w\chi_{B}}_{\infty}^{-1}
\sum_{j=1}^{\infty}
\frac{\abs{B_{j+1}}^{\frac{\delta k}{n}-1+\frac{\alpha}{n}+1-\frac{\alpha+(m-1)\delta}{n}}}{2^{j\eta}}\\
& \lesssim\norm{b}_{\Lambda(\delta)}^{k}\norm{fw}_{r}
\norm{w\chi_{B}}_{\infty}^{-1}|B|^{\frac{\delta(k-m+1)}{n}}
\sum_{j=1}^{\infty}2^{j(\delta(k-m+1)-\eta)}\\
& \lesssim\norm{b}_{\Lambda(\delta)}^{k}\norm{fw}_{r}
\norm{w\chi_{B}}_{\infty}^{-1}|B|^{\frac{\delta(k-m+1)}{n}}
\end{align*}
where the series is summable since $0\leq k\leq m$ and $\delta<\eta$.
%
\end{proof}

 \begin{lem}\label{lemalimite}Let $m\in \mathbb N$, $0<\alpha<n$, $0<\delta<\min\{1, (n-\alpha)/(m-1)\}$, $r=n/((m-1)\delta+\alpha)$, $b\in \Lambda(\delta)$ and $f\in L^{r}_w(\mathbb R^n)$ where $w$ is a weight such that $w^\beta\in A_{r/\beta,\infty}$ for some $1<\beta<r$. Let $B\subset \mathbb{R}^{n}$ be a ball and $f_2=f\chi_{\mathbb R^n\setminus 2B}$. If $T_\alpha$ is a fractional integral operator with kernel $K_\alpha\in  H_{\alpha,\Phi,m}(\delta)$,  where $\Phi$ is a Young function verifying $\Phi^{-1}(t)\lesssim t^{\frac{\beta-1}{r}}$ for every $t>0$, then, for every $x,u\in B$,
        \begin{equation*}
        \abs{T_{\alpha}((b-b_B)^{k}f_{2})(x)-T_{\alpha}((b-b_B)^{k}f_{2})(u)}
        \lesssim\frac{\norm{fw}_{r} \norm{b}_{\Lambda(\delta)}^{k}\abs{B}^{\frac{\delta (k-m+1)}{n}}}{\|w\chi_B\|_\infty}
        \end{equation*}
        for each $k=0,...,m$.
    \end{lem}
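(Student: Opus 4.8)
The plan is to run the same scheme as in the proof of Lemma~\ref{lema m>1}, simply replacing the pointwise kernel bound coming from $H_{\alpha,\infty}^{*}$ (which is not available here) by a Luxemburg--average estimate based on the condition $H_{\alpha,\Phi,m}(\delta)$. I would fix $B=B(x_{B},R_{B})$, take $x,u\in B$ and $0\le k\le m$, put $B_{j}=2^{j}B$, and --- since $f_{2}$ is supported in $\mathbb R^{n}\setminus 2B=\bigcup_{j\ge1}(B_{j+1}\setminus B_{j})$ --- start from
\[
\abs{T_{\alpha}((b-b_B)^{k}f_{2})(x)-T_{\alpha}((b-b_B)^{k}f_{2})(u)}\le\sum_{j\ge1}\int_{B_{j+1}\setminus B_{j}}\abs{K_{\alpha}(x-y)-K_{\alpha}(u-y)}\,\abs{b(y)-b_{B}}^{k}\,\abs{f(y)}\,dy .
\]
By Lemma~\ref{puntualLip}\ref{puntoprom}, $\abs{b(y)-b_{B}}^{k}\lesssim\norm{b}_{\Lambda(\delta)}^{k}\abs{B_{j+1}}^{\delta k/n}$ for $y\in B_{j+1}$, so it remains to estimate $\sum_{j\ge1}\abs{B_{j+1}}^{\delta k/n}\int_{B_{j+1}}\abs{K_{\alpha}(x-y)-K_{\alpha}(u-y)}\abs{f(y)}\,dy$.

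The key point is the estimate of a single annular integral. Writing $\abs{f}=(\abs{f}w)\,w^{-1}$, I would apply the generalized H\"older inequality of \S\ref{preliminaries} --- in its three--function form, obtained by iterating the stated two--function version --- over the ball $B_{j+1}$, pairing the kernel difference with $\Phi$, the factor $w^{-1}$ with the power function $t^{r/(r-\beta)}$, and $fw$ with $t^{r}$. The compatibility condition $\Phi^{-1}(t)\,t^{(r-\beta)/r}\,t^{1/r}\lesssim t$, i.e.\ $\Phi^{-1}(t)\lesssim t^{(\beta-1)/r}$, is exactly the hypothesis on $\Phi$, so this estimate is precisely what that hypothesis is designed for. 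For the weight factor I would use that $w^{\beta}\in A_{r/\beta,\infty}$ means $w^{-\beta r/(r-\beta)}\in A_{1}$, whence, by H\"older in the exponent $\beta$ and the $A_{1}$ condition, $\norm{w^{-1}}_{t^{r/(r-\beta)},B_{j+1}}\lesssim\norm{w\chi_{B_{j+1}}}_{\infty}^{-1}\le\norm{w\chi_{B}}_{\infty}^{-1}$; and for the last factor $\norm{fw}_{t^{r},B_{j+1}}\le\abs{B_{j+1}}^{-1/r}\norm{fw}_{L^{r}}$. This gives, for each $j$,
\[
\int_{B_{j+1}}\abs{K_{\alpha}(x-y)-K_{\alpha}(u-y)}\abs{f(y)}\,dy\lesssim\frac{\norm{fw}_{L^{r}}}{\norm{w\chi_{B}}_{\infty}}\,\abs{B_{j+1}}^{1-1/r}\,\norm{\big(K_{\alpha}(x-\cdot)-K_{\alpha}(u-\cdot)\big)\chi_{B_{j+1}}}_{\Phi,B_{j+1}} .
\]

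Finally I would sum in $j$. After the change of variables that turns $K_{\alpha}(x-y)-K_{\alpha}(u-y)$ into $K_{\alpha}(w)-K_{\alpha}(w-(x-u))$ with $w$ running over an annulus $\abs{\cdot}\sim 2^{j}R_{B}$ --- carried out exactly as in the proof of Lemma~\ref{pointwise}\ref{Hormanders}, and using $\abs{x-u}<2R_{B}$ --- I would multiply and divide the $j$--th term by $(2^{j})^{m\delta}(2^{j}R_{B})^{n-\alpha}$ so as to recognize the series in $H_{\alpha,\Phi,m}(\delta)$. With $n/r=(m-1)\delta+\alpha$ and $\abs{B_{j+1}}\thickapprox 2^{jn}\abs{B}$, the leftover factor $\abs{B_{j+1}}^{\delta k/n-1/r+\alpha/n}(2^{j})^{-m\delta}$ equals $2^{\,j\delta(k-2m+1)}\abs{B}^{\delta(k-m+1)/n}$; since $0\le k\le m$ and $m\ge1$ we have $k-2m+1\le0$, so this factor is $\lesssim\abs{B}^{\delta(k-m+1)/n}$ uniformly in $j$, and pulling it out of the sum and invoking $H_{\alpha,\Phi,m}(\delta)$ yields the claimed inequality. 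The main obstacle, as is usual with these H\"ormander--type hypotheses, will be the bookkeeping in this last step: aligning the dyadic annuli around $B$ with the origin--centred ones of $H_{\alpha,\Phi,m}(\delta)$ while respecting the constraint $R>c\abs{x-u}$ --- which may force a harmless separate treatment of the first finitely many annuli --- and checking that the exponent arithmetic closes exactly, most delicately in the case $k=m$, where the $(2^{j})^{m\delta}$ gain carried by $H_{\alpha,\Phi,m}(\delta)$ is used up most heavily.
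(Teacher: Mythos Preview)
Your proposal is correct and follows essentially the same route as the paper: dyadic annular decomposition, Lemma~\ref{puntualLip}\ref{puntoprom} for the symbol factor, the three-function generalized H\"older inequality with exponents $\Phi$, $t^{r}$ and $t^{(r/\beta)'}$ (which is exactly the paper's relation $\Phi^{-1}(t)t^{1/r}t^{1/(r/\beta)'}\lesssim t$), the $A_{r/\beta,\infty}$ condition to control $w^{-1}$, and finally the bound $\delta k+\alpha-n/r\le m\delta$ to absorb the leftover power of $2^{j}$ into the $H_{\alpha,\Phi,m}(\delta)$ series. The one simplification the paper makes that you may want to adopt is to recentre the annuli at $u$ rather than at $x_{B}$: setting $B_{u}=B(u,R)$, one has $B\subset 2B_{u}\subset 4B$ and $\mathbb R^{n}\setminus 2B\subset\mathbb R^{n}\setminus B_{u}$, so decomposing $\mathbb R^{n}\setminus B_{u}$ into the annuli $2^{j+1}B_{u}\setminus 2^{j}B_{u}$ makes the change of variables $w=u-y$ land exactly on $\chi_{|\cdot|\sim 2^{j}R}$ and $B(0,2^{j+1}R)$, eliminating the annulus-alignment bookkeeping you correctly flag at the end.
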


    \begin{proof}
        Fix $x,u\in B$ and $0\leq k\leq m$. Setting $B_u=B(u,R)$ which satisfies $B\subset 2B_u\subset 4B$, and using Lemma \ref{puntualLip} \ref{puntoprom} we have
        \begin{align*}
        |T_{\alpha}((b-b_{B})^{k}f_{2})(x)-&T_{\alpha}((b-b_{B})^{k}f_{2})(u)|\\
        &\lesssim \int_{\mathbb R^n\setminus B_u} |b(y)-b_{B_u}|^k|K_\alpha(x-y)-K_\alpha(u-y)| |f(y)|dy\\
        &\lesssim \sum\limits_{j=0}^\infty \int_{2^{j+1}B_u\setminus 2^j B_u} |b(y)-b_{B_u}|^k|K_\alpha(x-y)-K_\alpha(u-y)| |f(y)|dy\\
        &\lesssim \|b\|_{\Lambda(\delta)}^k\sum\limits_{j=0}^\infty |2^{j+1}B_u|^{\frac{\delta k}{n}}\int_{2^{j+1}B_u\setminus 2^j B_u} |K_\alpha(x-y)-K_\alpha(u-y)| |f(y)|dy.
        \end{align*}

    Since $1/r+1/(r/\beta)'=1-(\beta-1)/r$, we can use H\"older's inequality with $\Phi^{-1}(t)t^{1/r}t^{1/(r/\beta)'}\lesssim t$ and the fact that $w^\beta\in A_{r/\beta,\infty}$, to get
        \begin{align}\label{lemma3.9}
        |T_{\alpha}&((b-b_{B})^{k}f_{2})(x)-T_{\alpha}((b-b_{B})^{k}f_{2})(u)|\\
        \nonumber&\lesssim \|b\|_{\Lambda(\delta)}^k\|fw\|_{r}\sum\limits_{j=0}^\infty |2^{j+1}B_u|^{\frac{\delta k}{n}+1-\frac{1}{r}} \||K_\alpha(\cdot-(u-x))-K_\alpha(\cdot)|\chi_{|\cdot|\sim 2^{j}R}\|_{\Phi,2^{j+1}B_u}\frac{\|w^{-1}\chi_{2^{j+1}B_u}\|_{(r/\beta)'}}{|2^{j+1}B_u|^{1/(r/\beta)'}}\\
        \nonumber&\lesssim \frac{\|b\|_{\Lambda(\delta)}^k |B|^{\frac{\delta k+\alpha}{n}-\frac{1}{r}}\|fw\|_r }{\|w\chi_{2B_u}\|_\infty}\sum\limits_{j=1}^\infty   (2^jR)^{n-\alpha} 2^{j(\delta k+\alpha-\frac{n}{r})} \||K_\alpha(\cdot-(u-x))-K_\alpha(\cdot)|\chi_{|\cdot|\sim 2^{j}R}\|_{\Phi,2^{j+1}B_u}\\
        \nonumber&\leq \frac{\|b\|_{\Lambda(\delta)}^k |B|^{\frac{\delta (k-m+1)}{n}}\|fw\|_r }{\|w\chi_{B}\|_\infty} \sum\limits_{j=1}^\infty 2^{jm\delta}(2^jR)^{n-\alpha} \||K_\alpha(\cdot-(u-x))-K_\alpha(\cdot)|\chi_{|\cdot|\sim 2^{j}R}\|_{\Phi,2^{j+1}B_u}\\
        \nonumber&\lesssim \frac{\|b\|_{\Lambda(\delta)}^k |B|^{\frac{\delta (k-m+1)}{n}}\|fw\|_r }{\|w\chi_{B}\|_\infty},
        \end{align}
        where we have used that $\delta k+\alpha-n/r\leq m\delta$ for $m\in \mathbb N$, and that $K_\alpha\in H_{\alpha, \Phi,m}(\delta)$.
\end{proof}

\section{Proofs of main results}\label{proofs}

\begin{proof}[Proof of Theorem \ref{teo1}:] The proof will be done by induction {and, without loss of generality, we shall assume $\|b\|_{\Lambda(\delta)}=1$}. Notice that when $m=0$, $1/q=1/p-\alpha/n$ and the boundedness result is known to be true for $A_{p,q}$ weights (see \cite{BDP} in the more general setting of variable Lebesgue spaces).

    Fix $m\in \mathbb N$ and define the following auxiliary exponents
    \begin{equation*}
    \frac{1}{p_{j}}=\frac{1}{q}+\frac{\delta(m-j)}{n}=\frac{1}{p}-\frac{j\delta+\alpha}{n}, \quad j=0,\dots, m.
    \end{equation*}
    Clearly, $p_m=q$ and, if $\theta_j=(m-j)\delta$, we have that
    \begin{equation}\label{p_j}
    \frac{1}{p_{j}}=\frac{1}{q}+\frac{\theta_j}{n}=\frac{1}{p}-\frac{j\delta+\alpha}{n}, \quad j=0,\dots, m.
    \end{equation}
    Notice also that $p\leq p_j\leq p_l\leq q$ for every $0\leq j\leq l\leq m$.

    It is easy to see that $w\in A_{p,q}$ yields $w^\gamma\in A_{\frac{p}{\gamma},\frac{q}{\gamma}}$ for every $0<\gamma<1$. Moreover, from the properties of these classes, we have that $w^\gamma\in A_{\frac{p_j}{\gamma},\frac{p_l}{\gamma}}$ for every $0\leq j\leq l\leq m$.

    By applying Fefferman-Stein's inequality \eqref{FeffSt} with $w^q\in A_{1+q/p'}\subset A_\infty$, we get
    \[\|wT_{\alpha,b}^m f\|_{q}\leq \|w M_\gamma(T_{\alpha,b}^m f)\|_{q} \lesssim \|w M_{0,\gamma}^\sharp (T_{\alpha,b}^m f)\|_{q}.\]
    Now, since $K_\alpha\in S_\alpha\cap H_{\alpha,\infty}^*$, from Lemma \ref{pointwise} we have that
    {\begin{equation}\label{teo1maximales}
    \|wT_{\alpha,b}^m f\|_{q}   \lesssim \sum\limits_{j=0}^{m-1} \|wM_{\theta_j,\gamma} (|T_{\alpha,b}^j f|)\|_{q}+\|wM_{\theta_0+\alpha}f\|_{q}.
    \end{equation}}

    Since $w\in A_{p,q}$ and $1/q=1/p-(\theta_0+\alpha)/n$, we have that
    {\[\|wT_{\alpha,b}^m f\|_{q}\lesssim \sum\limits_{j=0}^{m-1} \|wM_{\theta_j,\gamma} (|T_{\alpha,b}^j f|)\|_{q}+\|fw\|_{p}.\]}

On the other hand, since $w^{\gamma} \in A_{\frac{p_{j}}{\gamma},\frac{q}{\gamma}}$ for every $j=1,\dots, m-1$, then the fractional maximal operator $M_{\theta_j\gamma}$ is bounded from $L^{\frac{p_{j}}{\gamma}}(\mathbb R^n)$ to $L^{\frac{q}{\gamma}}(\mathbb R^n)$. Thus, we have that
    {\begin{align*}
    \|wT_{\alpha,b}^m f\|_{q}&\lesssim \sum\limits_{j=0}^{m-1} \|w^\gamma M_{\theta_j\gamma} (|T_{\alpha,b}^j f|^\gamma)\|_{q/\gamma}^{1/\gamma}+\|wf\|_{p}\\
    &\lesssim \sum\limits_{j=0}^{m-1} \|w^\gamma (T_{\alpha,b}^j f)^{\gamma}\|_{p_j/\gamma}^{1/\gamma}+\|wf\|_{p}\\
    &\lesssim \sum\limits_{j=0}^{m-1} \|w T_{\alpha,b}^j f\|_{p_{j}}+\|wf\|_{p}.
    \end{align*}}

    Since $1/p_{j}=1/p-(j\delta+\alpha)/n$ and $w\in A_{p,p_j}$, {and recalling that $\|b\|_{\Lambda(\delta)}=1$,} we apply the inductive hypothesis to get
    {\begin{equation*}
    \|wT_{\alpha,b}^m f\|_{q}\lesssim \sum\limits_{j=0}^{m-1} \|w f\|_{p}+\|wf\|_{p}\lesssim \|wf\|_{p}. \qedhere
    \end{equation*}}
\end{proof}

\begin{proof}[Proof of Theorem \ref{boundednessLipw}:] Fix $f\in L^{r}_w(\mathbb R^n)$. For a ball $B\subset \mathbb{R}^{n}$, set $f_{1}=f\chi_{2B}$, $f_{2}=f-f_{1}$ and $a_{B}=\frac{1}{|B|}\int_BT_{\alpha,b}^{m}f_{2}$. Then,
    \begin{align*}
    \frac{\norm{w\chi_B}_{\infty}}{|B|}\int_B\abs{T_{\alpha,b}^{m}f(x)-a_{B}}dx&\leq \frac{\norm{w\chi_B}_{\infty}}{|B|}\int_B\abs{T_{\alpha,b}^{m}f_{1}(x)}dx+\frac{\norm{w\chi_B}_{\infty}}{|B|}\int_B\abs{T_{\alpha,b}^{m}f_{2}(x)-a_{B}}dx\\
    &= I_1+I_{2}.
    \end{align*}
    Let us first notice that, since $w\in A_{r,\infty}$, there exists $1<s'<r$ such that $w\in A_{s',\infty}$ and, we can choose $1<q<\infty$ such that $\frac{1}{q}+\frac{1}{r}+\frac{1}{s}=1$.

    For $I_1$ we write
    \begin{align*}
    I_1=\frac{\norm{w\chi_B}_{\infty}}{|B|}\int_B\abs{\int_{2B}(b(x)-b(y))^{m}K_\alpha(x-y)f(y)dy}dx.
    \end{align*}
    By using Tonelli's theorem and the fact that $b\in \Lambda(\delta)$, we obtain
    \begin{align*}
    I_1&\lesssim  \norm{w\chi_B}_{\infty} \int_{2B}|f(y)|\left(\frac{1}{(2R)^n}\int_{2B} |b(x)-b(y)|^{m} |K_\alpha(x-y)|dx\right)dy\\
    &\lesssim \|b\|_{\Lambda(\delta)}^m |2B|^{\frac{\delta m}{n}}\norm{w\chi_B}_{\infty} \int_{2B}|f(y)|  \left(\fint_{2B}  |K_\alpha(x-y)|dx \right)dy
    \end{align*}

    We notice that for $x,y\in 2B$, if $R$ is the radius of $B$, then $x\in B(y,4R)$ so we can use Lemma \ref{Salfabola} and H\"older's inequality to have
    \begin{align*}
    I_1&\lesssim \|b\|_{\Lambda(\delta)}^m |2B|^{\frac{m\delta+\alpha}{n}-1}\norm{w\chi_B}_{\infty} \|fw\|_{r} \|w^{-1}\chi_{2B}\|_{r'} \\
    &\leq \|b\|_{\Lambda(\delta)}^m \|fw\|_{r}|2B|^{\frac{m\delta+\alpha}{n}-\frac{1}{r}}\norm{w\chi_B}_{\infty}  \frac{\|w^{-1}\chi_{2B}\|_{r'}}{|2B|^{1/r'}} \\
    &\lesssim [w]_{A_{r,\infty}} \|b\|_{\Lambda(\delta)}^m \|fw\|_{r} |B|^{\frac{\tilde{\delta}}{n}}.
    \end{align*}

    For $I_2$, we first estimate the difference $\abs{T^{m}_{\alpha,b}f_{2}(x)-(T^{m}_{\alpha,b}f_{2})_B}$ for every $x\in B$. Since
\[ \abs{T^{m}_{\alpha,b}f_{2}(x)-(T^{m}_{\alpha,b}f_{2})_B}\leq \frac{1}{|B|}\int_B\abs{T^{m}_{\alpha,b}f_{2}(x)-T^{m}_{\alpha,b}f_{2}(y)}dy,\]
we analyze $A=\abs{T^{m}_{\alpha,b}f_{2}(x)-T^{m}_{\alpha,b}f_{2}(y)}$. If $x,y\in B$
\begin{align*}
A &\leq \int_{(2B)^c}\abs{(b(x)-b(z))^{m}K_\alpha(x-z)-(b(y)-b(z))^{m}K_\alpha(y-z)}\abs{f(z)}dz\\
&\leq \int_{(2B)^c}\abs{b(x)-b(z)}^{m}\abs{K_\alpha(x-z)-K_\alpha(y-z)}\abs{f(z)}dz\\
  &\quad+ \int_{(2B)^c}\abs{(b(x)-b(z))^{m}-(b(y)-b(z))^{m}}\abs{K_\alpha(y-z)}\abs{f(z)}dz\\
   &\leq \int_{(2B)^c}\abs{b(x)-b(z)}^{m}\abs{K_\alpha(x-z)-K_\alpha(y-z)}\abs{f(z)}dz\\
  &\quad+\abs{b(x)-b(y)}\sum_{k=0}^{m-1}\int_{(2B)^c}\abs{b(x)-b(z)}^{m-1-k}\abs{b(y)-b(z)}^{k}\abs{K_\alpha(y-z)}\abs{f(z)}dz\\
&=I_{3}+I_{4}.
\end{align*}

By the definition of $\Lambda(\delta)$, we get that
\begin{align*}
  I_{3}&\lesssim \norm{b}_{\Lambda(\delta)}^{m}\int_{(2B)^c}\abs{x-z}^{\delta m}\abs{K_{\alpha}(x-z)-K_{\alpha}(y-z)}\abs{f(z)}dz\\
&\lesssim \norm{b}_{\Lambda(\delta)}^{m}\sum_{j=1}^{\infty}\int_{2^{j+1}B\setminus 2^{j}B}\abs{x-z}^{\delta m}
\frac{\abs{x-y}^{\eta}}{\abs{x-z}^{n-\alpha+\eta}}\abs{f(z)}dz\\
&\lesssim \norm{b}_{\Lambda(\delta)}^{m}\sum_{j=1}^{\infty}
\frac{2^{j\delta m}|B|^{\frac{\delta m}{n}}}{2^{j(n-\alpha+\eta)}|B|^{1-\frac{\alpha}{n}}}
\int_{2^{j+1}B\setminus 2^{j}B}\abs{f(z)}dz.
\end{align*}
Then, by H\"{o}lder's inequality, the definition of $\tilde{\delta}$ and the fact that $w\in A_{r,\infty}$, we deduce that
\begin{equation*}
  I_{3}\lesssim \norm{b}_{\Lambda(\delta)}^{m}\norm{fw}_{r}|B|^{\frac{\tilde{\delta}}{n}}
  \sum_{j=1}^{\infty}2^{j(\tilde{\delta}-\eta)}\frac{\norm{w^{-1}\chi_{2^{j+1}B}}_{r'}}{|2^{j+1}B|^{1/r'}}\lesssim [w]_{A_{r,\infty}}\frac{\norm{b}_{\Lambda(\delta)}^{m}\norm{fw}_{r}|B|^{\frac{\tilde{\delta}}{n}}}{\norm{w\chi_B}_{\infty}}.
\end{equation*}

In order to estimate $I_{4}$, we use that $b\in \Lambda(\delta)$ and the smoothness condition $S_{\alpha}$ on the kernel to get that
\begin{align*}
  I_{4}&\lesssim \norm{b}_{\Lambda(\delta)}\abs{x-y}^{\delta}\norm{b}_{\Lambda(\delta)}^{m-1}\sum_{k=0}^{m-1}
  \sum_{j=1}^{\infty}\int_{2^{j+1}B\setminus 2^{j}B}\abs{x-z}^{\delta(m-1-k)}
 \abs{y-z}^{\delta k}\abs{K_{\alpha}(x-z)}\abs{f(z)}dz\\
  &\lesssim \norm{b}^{m}_{\Lambda(\delta)}|B|^{\frac{\delta}{n}}
  \sum_{j=1}^{\infty}|2^{j+1}B|^{\frac{\delta(m-1)}{n}}\int_{2^{j+1}B\setminus 2^{j}B}\abs{K_{\alpha}(x-z)}\abs{f(z)}dz.
\end{align*}
Then, by Tonelli's Theorem and the smoothness condition $S_\alpha$
\begin{align*}
|T^{m}_{\alpha,b}&f_{2}(x)-(T^{m}_{\alpha,b}f_{2})_B|\\
 &\lesssim \frac{1}{|B|} \int_B (I_3+I_4) \;dy\\
& \lesssim \frac{[w]_{A_{r,\infty}}}{\norm{w\chi_B}_{\infty}}\norm{b}_{\Lambda(\delta)}^{m}\norm{fw}_{r}|B|^{\frac{\tilde{\delta}}{n}}\\
&\quad+\frac{\norm{b}^{m}_{\Lambda(\delta)}|B|^{\frac{\delta}{n}}}{|B|}
\sum_{j=1}^{\infty}|2^{j+1}B|^{\frac{\delta(m-1)}{n}}\int_B\int_{2^{j+1}B\setminus 2^{j}B}\abs{K_{\alpha}(x-z)}\abs{f(z)}dz dy\\
&\lesssim \frac{[w]_{A_{r,\infty}}}{\norm{w\chi_B}_{\infty}}\norm{b}_{\Lambda(\delta)}^{m}\norm{fw}_{r}|B|^{\frac{\tilde{\delta}}{n}}\\
&\quad+\norm{b}^{m}_{\Lambda(\delta)}|B|^{\frac{\delta}{n}}
\sum_{j=1}^{\infty}|2^{j+1}B|^{\frac{\delta(m-1)}{n}}\int_{2^{j+1}B\setminus 2^{j}B}\abs{f(z)}\left(\fint_{B(z,2^{j+2}R)} |K_\alpha(y-z)| dy\right)dz \\
&\lesssim \frac{[w]_{A_{r,\infty}}}{\norm{w\chi_B}_{\infty}}\norm{b}_{\Lambda(\delta)}^{m}\norm{fw}_{r}|B|^{\frac{\tilde{\delta}}{n}}+\norm{b}^{m}_{\Lambda(\delta)}|B|^{\frac{\delta}{n}}
\sum_{j=1}^{\infty}|2^{j+1}B|^{\frac{(m-1)\delta+\alpha}{n}-1}\int_{2^{j+1}B\setminus 2^{j}B}\abs{f(z)}dz \\
&\lesssim \frac{[w]_{A_{r,\infty}}}{\norm{w\chi_B}_{\infty}}\norm{b}_{\Lambda(\delta)}^{m}\norm{fw}_{r}|B|^{\frac{\tilde{\delta}}{n}}\\
&\quad+\norm{b}^{m}_{\Lambda(\delta)}|B|^{\frac{m\delta+\alpha}{n}-\frac{1}{r}}
\sum_{j=1}^{\infty}2^{j\left(\frac{(m-1)\delta+\alpha}{n}-\frac{1}{r}\right)}\|fw\|_r \frac{\|w^{-1}\chi_{2^{j+1}B}\|_{r'}}{|2^{j+1}B|^{1/r'}}\\
&\lesssim \frac{[w]_{A_{r,\infty}}}{\norm{w\chi_B}_{\infty}}\norm{b}_{\Lambda(\delta)}^{m}\norm{fw}_{r}|B|^{\frac{\tilde{\delta}}{n}}\left(1+\sum_{j=1}^{\infty}2^{j(\tilde{\delta}-\delta)}\right)\\
&\lesssim \frac{[w]_{A_{r,\infty}}}{\norm{w\chi_B}_{\infty}}\norm{b}_{\Lambda(\delta)}^{m}\norm{fw}_{r}|B|^{\frac{\tilde{\delta}}{n}}
\end{align*}
since $\tilde{\delta}<\delta$. Therefore, we deduce the inequality
\[I_2\lesssim [w]_{A_{r,\infty}}\norm{b}_{\Lambda(\delta)}^{m}\norm{fw}_{r}|B|^{\frac{\tilde{\delta}}{n}}\]
and, thus,
\[\frac{\norm{w\chi_B}_{\infty}}{|B|}\int_B\abs{T_{ \alpha,b}^{m}f(x)-a_{B}}dx\lesssim \norm{b}_{\Lambda(\delta)}^{m}\norm{fw}_{r}|B|^{\frac{\widetilde{\delta}}{n}},\]
so it remains to take supremum over all the balls $B$ to get the desired result.
\end{proof}

\begin{proof}[Proof of Theorem \ref{limitcase} ]
    Let $B\subset \mathbb{R}^{n}$ be a ball and $x\in B$. Let $f=f_{1}+f_{2}$ with $f_{1}=f\chi_{2B}$. Then,
    \begin{align*}
    T_{\alpha,b}^{m}f(x)-(T_{\alpha,b}^{m}f)_{B}=&T_{\alpha,b}^{m}f_{1}(x)-(T_{\alpha,b}^{m}f_{1})_{B}\\
    &+\sum_{k=0}^{m}c_{k}\left[(b(x)-b_{B})^{m-k}T_{\alpha}((b-b_{B})^{k}f_{2})(x)\right.\\
    &\left.-\frac{1}{|B|}\int_{B}(b(z)-b_{B})^{m-k}T_{\alpha}((b-b_{B})^{k}f_{2})(z)dz\right].
    \end{align*}
    We can rewrite the above identity in the following form
    \begin{align*}
    T_{\alpha,b}^{m}f(x)&-(T_{\alpha,b}^{m}f)_{B}=\sigma_{1}(x)-(\sigma_{1})_{B}\\
    &\quad+\sum_{k=0}^{m}c_{k}\left[\sigma_{2}(x,u,k)-(\sigma_{2}(\cdot,u,k))_{B}+\sigma_{3}(x,u,k)-(\sigma_{3}(\cdot,u,k))_{B}\right],
    \end{align*}
    where
    \begin{equation*}
    \sigma_{1}(x)=T_{\alpha,b}^{m}f_{1}(x),
    \end{equation*}
    \begin{equation*}
    \sigma_{2}(x,u,k)=(b(x)-b_{B})^{m-k}\left(T_{\alpha}((b-b_{B})^{k}f_{2})(x)-T_{\alpha}((b-b_{B})^{k}f_{2})(u)\right),
    \end{equation*}
    \begin{equation*}
    \sigma_{3}(x,u,k)=(b(x)-b_{B})^{m-k}  T_{\alpha}((b-b_{B})^{k}f_{2})(u).
    \end{equation*}

    For $\sigma_{1}$, since $w\in A_{\frac{n}{m\delta+\alpha},\infty}$, there exists $1<p<\frac{n}{m\delta+\alpha}$ such that $w\in A_{p,\infty}$. We take $\frac{1}{q}=\frac{1}{p}-\frac{m\delta+\alpha}{n}$, so $q>p$ and $w\in A_{q,\infty}$ and, moreover, $w\in A_{p,q}$. By applying H\"older's inequality with $q$ and $q'$ and the boundedness of $T^{m}_{\alpha,b}$ from $L^{p}_{w}(\mathbb R^n)$ to $L^{q}_{w}(\mathbb R^n)$ (Theorem \ref{teo1}) we obtain that
    \begin{align*}
    \frac{1}{|B|}\int_{B}|\sigma_{1}(x)|dx&\leq \frac{C}{|B|}\left(\int_{B}\abs{T_{\alpha,b}^{m}f_{1}(x)w(x)}^{q}\right)^{1/q}
    \norm{w^{-1}\chi_{B}}_{q'}\\
    &\lesssim\|b\|_{\Lambda(\delta)}^m\frac{\norm{fw\chi_{B}}_{p}}{|B|}\norm{w^{-1}\chi_{B}}_{q'}.
    \end{align*}
    Since $\frac{1}{p}=\frac{\alpha+(m-1)\delta}{n}+\frac{1}{q}+\frac{\delta}{n}$, we can apply again H\"older's inequality and the fact that $w\in A_{q, \infty}$ to get
    \begin{align*}
    \frac{1}{|B|}\int_{B}|\sigma_{1}(x)|dx
    &\lesssim\|b\|_{\Lambda(\delta)}^m\norm{fw}_{\frac{n}{\alpha+(m-1)\delta}}|B|^{\delta/n}\frac{\norm{w^{-1}\chi_{B}}_{q'}}{|B|^{1/q'}}\\
    &\lesssim\|b\|_{\Lambda(\delta)}^m\norm{fw}_{\frac{n}{\alpha+(m-1)\delta}}|B|^{\delta/n}\norm{w\chi_{B}}_{\infty}^{-1}.
    \end{align*}

    In order to estimate $\sigma_{2}$ we use the inequality \[\frac{1}{|B|}\int_{B}\abs{b(x)-b_{B}}^{m-k}dx\leq \norm{b}_{\Lambda(\delta)}^{m-k}|B|^{\frac{\delta(m-k)}{n}}\]
    and Lemma \ref{lema m>1} to obtain
    \begin{align*}
    \frac{1}{|B|}\int_{B}\abs{\sigma_{2}(x,u,k)}dx&\lesssim\norm{b}^{k}_{\Lambda(\delta)}\norm{fw}_{\frac{n}{\alpha+(m-1)\delta}}\norm{w\chi_{B}}^{-1}_{\infty}\frac{
    |B|^{\frac{\delta(k-m+1)}{n}}}{|B|}\int_{B}\abs{b(x)-b_{B}}^{m-k}dx\\
    &  \lesssim\norm{b}^{m}_{\Lambda(\delta)}\norm{fw}_{\frac{n}{\alpha+(m-1)\delta}}\norm{w\chi_{B}}^{-1}_{\infty}
    |B|^{\delta/n}.
    \end{align*}
    Consequently, since
    \begin{align*}
    \sum_{k=0}^{m}c_{k}\left[\sigma_{3}(x,u,k)-(\sigma_{3}(\cdot,u,k))_{B}\right]&=
    \left[T_{\alpha,b}^{m}f(x)-(T_{\alpha,b}^{m}f)_{B}\right]-\left[\sigma_{1}(x)-(\sigma_{1})_{B}\right]\\
    &-  \sum_{k=0}^{m}c_{k}\left[\sigma_{2}(x,u,k)-(\sigma_{2}(\cdot,u,k))_{B}\right]
    \end{align*}
    by first assuming that $T^{m}_{\alpha,b}f: L^{\frac{n}{\alpha+(m-1)\delta}}_{w} \hookrightarrow \mathbb{L}_{w}(\delta)$, then
    \begin{align*}
    \frac{1}{|B|}\int_{B}&\abs{\sum_{k=0}^{m}c_{k}\left[\sigma_{3}(x,u,k)-(\sigma_{3}(\cdot,u,k))_{B}\right]dx}\\
    &\leq
    \frac{1}{|B|}\int_{B} \abs{T_{\alpha,b}^{m}f(x)-(T_{\alpha,b}^{m}f)_{B}}dx+\frac{2}{|B|}\int_{B}|\sigma_{1}(x)|dx\\
    &\quad+  \sum_{k=0}^{m}c_{k}\frac{2}{|B|}\int_{B}\abs{\sigma_{2}(x,u,k)}dx\\
    &\lesssim\|b\|_{\Lambda(\delta)}^m\norm{fw}_{\frac{n}{\alpha+(m-1)\delta}}\norm{w\chi_{B}}_{\infty}^{-1}|B|^{\delta/n}.
    \end{align*}
    On the other hand, if we suppose that \eqref{ecu2-m>1} holds, it is easy to see that
    $T^{m}_{\alpha,b}f:L^{\frac{n}{\alpha+(m-1)\delta}}_{w}(\mathbb R^n)\hookrightarrow \mathbb{L}_{w}(\delta)$.
\end{proof}

     \begin{proof}[Proof of Theorem \ref{teo1H}:] We proceed by induction. We must point out that the case $m=0$ was already proved in \cite{BDP}. As in the proof of Theorem \ref{teo1} we have that
    \begin{equation*}
    \|wT^{m}_{\alpha,b}f\|_{q}\lesssim \|w M_{0,\gamma}^{\sharp}(T^{m}_{\alpha,b}f)\|_{q}.
    \end{equation*}
        We shall now use the second part of Lemma \ref{pointwise}, since we have that $K_\alpha \in S_{\alpha}\cap H_{\alpha,\Phi}$. Thus, we obtain that
        {\begin{align*}
        \|wT_{\alpha,b}^m f\|_{q}   &\lesssim \sum\limits_{j=0}^{m-1} \|wM_{\theta_j,\gamma} (|T_{\alpha,b}^j f|)\|_{q}+\|wM_{m\delta+\alpha,\widetilde{\Phi}}f\|_{q},
        \end{align*}
        where we have assumed, without loss of generality, that $\|b\|_{\Lambda(\delta)}=1$.}

        From the hypothesis on the weight $w$ and the Young function $\widetilde{\Phi}$, by Theorem \ref{BDPcteH} we know that $\|wM_{m\delta+\alpha,\widetilde{\Phi}}f\|_{q}\lesssim \|fw\|_p$.

        The proof now follows in the same way as in the proof of Theorem \ref{teo1}.
    \end{proof}

\begin{proof}[Proof of Theorem \ref{HormanderLipw}:]
        Take $f, f_1, f_2$ and $a_B$ as in the proof of Theorem \ref{boundednessLipw}, and define $I_1$ and $I_2$ likewise.

        Since in $I_1$ we have only used the size condition $S_\alpha$, the estimation is the same, by taking into account that $w^\beta\in A_{r/\beta,\infty}$ yields $w\in A_{r,\infty}$ for any $\beta\geq 1$.

        For $I_2$ we proceed similarly but we have to use now that $K_\alpha\in H_{\alpha, \Phi, m}(\delta)$ with $\Phi^{-1}(t)\lesssim t^{\frac{\beta-1}{r}}$ for some $1<\beta<r$ and all $t>0$. We split the average into $I_3$ and $I_4$ as in the proof of Theorem \ref{boundednessLipw}. The last one can be controlled in the same form. The difference will be in $I_3$. Recall that
        \[I_3=\int_{(2B)^c}\abs{b(x)-b(z)}^{m}\abs{K_\alpha(x-z)-K_\alpha(y-z)}\abs{f(z)}dz,\]
for $x\in B$.

         By the definition of $\Lambda (\delta)$, we get that
        \begin{align*}
        I_{3}&\lesssim \norm{b}_{\Lambda(\delta)}^{m}\int_{(2B)^c}\abs{x-z}^{\delta m}\abs{K_{\alpha}(x-z)-K_{\alpha}(y-z)}\abs{f(z)}dz\\
        &\lesssim \norm{b}_{\Lambda(\delta)}^{m}\sum_{j=1}^{\infty}|2^{j+1}B|^{\frac{\delta m}{n}}
        \int_{2^{j+1}B\setminus 2^{j}B}\abs{K_{\alpha}(x-z)-K_{\alpha}(y-z)}\abs{f(z)}dz.
            \end{align*}
Now, since $K_\alpha\in H_{\alpha,\Phi,m}(\delta)$, $w^\beta\in A_{r/\beta,\infty}$ and $\Phi^{-1}(t)t^{1/r}t^{1/(r/\beta)'}\lesssim t$, we can prodeed as in \eqref{lemma3.9} with $k=m$ to obtain
            \[I_3\lesssim \frac{\norm{b}_{\Lambda(\delta)}^{m}|B|^{\frac{\widetilde{\delta}}{n}}\|fw\|_r}{\|w\chi_B\|_\infty}.\qedhere\]
\end{proof}

\begin{proof}[Proof of Theorem \ref{limitcaseH}:]
        We proceed as in the proof of Theorem \ref{limitcase}. We must only use the corresponding hypothesis on the kernel, that guarantees the validity of Theorem \ref{teo1H} and Lemma \ref{limitcaseH}, which are immediate from the fact that $S_\alpha\cap H_{\alpha, \Phi,m}(\delta)\subset S_\alpha\cap H_{\alpha,\Phi}$ (see Remark \ref{contentionsHdelta}).
    \end{proof}

     \bibliographystyle{abbrv}

\end{document}